\newtheorem{theorem}{Theorem}
\theoremstyle{plain}
\newtheorem{corollary}{Corollary}
\newtheorem{lemma}{Lemma}
\newtheorem{remark}{Remark}
\numberwithin{equation}{section}
\begin{document}

\title[Approximating $n$-th differentiable functions of two variables]
{Approximating $n$-th differentiable functions of two variables
and Mid-point formula}

\author[M. W. Alomari]{Mohammad W. Alomari}
\address{ Department of Mathematics, Faculty of Science and
Information Technology, Irbid National University, 2600 Irbid
21110, Jordan.} \email{mwomath@gmail.com}

\date{\today}
\subjclass[2000]{26B30, 26B40, 26D10, 26D15}

\keywords{Approximation, bounded bivariation, Function of two
variables}

\begin{abstract}
In this work,  approximations for real two variables function $f$
which has continuous partial $(n-1)$-derivatives $(n \ge 1)$ and
has the $n$--th partial derivative of bounded bivariation or
absolutely continuous are established.   Explicit bounds for this
representation are given. An approximation of a function $f$ by
its mid-point formula with its error is established.
\end{abstract}

\maketitle

\section{Introduction}

The Trapezoid formula
\begin{align}
\label{eq1.1}\int_a^b{f(x)dx}  \approx (b-a)\frac{f(a)+f(b)}{2}
\end{align}
is one of the most popular rule that approximate the definite
integral $\int_a^b{f(x)dx}$ of a real function $f$ over an
interval $[a,b]$.

In the last forty years, a great attention and valuable efforts
were devoted to investigate this formula in several ways and for
various type of functions. Among others, the generalized trapezoid
formula (GFT)
\begin{align}
\label{eq1.2} \frac{1}{{b - a}}\left[ {\left( {x - a}
\right)f\left( a \right) + \left( {b - x} \right)f\left( b
\right)} \right]
\end{align}
was expanded by Cerone et al. in \cite{Cerone}, and eligible to be
considered as one of the important works in the latest presented
literature. So that instead of \eqref{eq1.1} one may use
\eqref{eq1.2}, i.e.,
\begin{align}
\label{eq1}\int_a^b{f(t)dt}  \approx \frac{1}{{b - a}}\left[
{\left( {x - a} \right)f\left( a \right) + \left( {b - x}
\right)f\left( b \right)} \right],\qquad \forall x\in [a,b].
\end{align}
In another approach, if $f: [a,b] \to \mathbb{R}$ is assumed to be
bounded on $[a,b]$. The chord that connects its end points $A =
(a, f (a))$ and $B = (b, f (b))$ has the equation
\begin{align*}
d_f :\left[ {a,b} \right] \to \mathbb{R},\,\,\,\,\,\,d_f \left(
{x} \right) = \frac{1}{{b - a}}\left[ {f\left( a \right)\left( {b
- {x}} \right) + f\left( b \right)\left( {{x} - a} \right)}
\right].
\end{align*}
Some error approximations for the value of the function
$f(\bf{x})$ by the functional
\begin{align*}
\Phi _f \left( {x} \right): = \frac{{b - {x}}}{{b - a}} \cdot
f\left( a \right) + \frac{{{x} - a}}{{b - a}} \cdot f\left( b
\right) - f\left( {x} \right).
\end{align*}
were introduced by Dragomir in \cite{Dragomir1}, e.g., the
following bounds for $\Phi _f \left( {x} \right)$ are hold:
\begin{theorem}
\label{thm.DragomirSb1}If $f:[a,b] \to \mathbb{R}$ is of bounded
variation, then
\begin{align}
\label{eq.DragomirSb}\left| {\Phi _f \left( {x} \right)} \right|
&\le \left( {\frac{{b - {x}}}{{b - a}}} \right) \cdot \bigvee_a^x
\left( f \right) + \left( {\frac{{{x} - a}}{{b - a}}} \right)
\cdot \bigvee_x^b \left( f \right)
\nonumber\\
&\le\left\{ \begin{array}{l}
 \left[ {\frac{1}{2} + \left| {\frac{{{x} - {\textstyle{{a + b} \over 2}}}}{{b - a}}} \right|} \right] \cdot \bigvee_a^b \left( f \right) \\
 \left[ {\left( {\frac{{b - {x}}}{{b - a}}} \right)^p  + \left( {\frac{{{x} - a}}{{b - a}}} \right)^p } \right]^{{\textstyle{1 \over p}}}  \cdot \left[ {\left( {\bigvee_a^x \left( f \right)} \right)^q  + \left( {\bigvee_x^b \left( f \right)} \right)^q } \right]^{{\textstyle{1 \over q}}};  \\
   \,\,\,\,\, \,\,\,\,\, \,\,\,\,\, \,\,\,\,\, \,\,\,\,\, \,\,\,\,\, \,\,\,\,\, \,\,\,\,\, \,\,\,\,\, \,\,\,\,\, \,\,\,\,\, \,\,\,\,\, \,\,\,\,\, \,\,\,\,\, \,\,\,\,\, \,\,\,\,\,\,\,\,\,\,p>1, \frac{1}{p}+ \frac{1}{q}=1
  \\
 \frac{1}{2}\bigvee_a^b \left( f \right) + \frac{1}{2}\left| {\bigvee_a^x \left( f \right) - \bigvee_x^b \left( f \right)} \right| \\
 \end{array} \right.
\end{align}
The first inequality in (\ref{eq.DragomirSb}) is sharp. The
constant $\frac{1}{2}$ is best possible in the first and third
branches.
\end{theorem}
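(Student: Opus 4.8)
The plan is to begin from the algebraic identity that exhibits $\Phi_f(x)$ as a convex combination of two increments of $f$. Since $\frac{b-x}{b-a}+\frac{x-a}{b-a}=1$, one may subtract $f(x)=\frac{b-x}{b-a}f(x)+\frac{x-a}{b-a}f(x)$ to obtain
\begin{align*}
\Phi_f(x)=\frac{b-x}{b-a}\bigl(f(a)-f(x)\bigr)+\frac{x-a}{b-a}\bigl(f(b)-f(x)\bigr).
\end{align*}
Applying the triangle inequality and then the elementary estimate $\left|f(v)-f(u)\right|\le\bigvee_u^v(f)$ (a single two-point partition is admissible, so this holds for every function of bounded variation) to the two increments $f(a)-f(x)$ and $f(b)-f(x)$ gives at once
\begin{align*}
\left|\Phi_f(x)\right|\le\frac{b-x}{b-a}\bigvee_a^x(f)+\frac{x-a}{b-a}\bigvee_x^b(f),
\end{align*}
which is the first inequality in \eqref{eq.DragomirSb}. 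This is the only step that uses the bounded-variation hypothesis.

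To obtain the three branches I would treat the right-hand side as a two-term sum $PV_1+QV_2$, where $P=\frac{b-x}{b-a}$, $Q=\frac{x-a}{b-a}$, $V_1=\bigvee_a^x(f)$, $V_2=\bigvee_x^b(f)$, so that $P+Q=1$ and $V_1+V_2=\bigvee_a^b(f)$ by additivity of the total variation. The first branch follows from $PV_1+QV_2\le\max\{P,Q\}\,(V_1+V_2)$ together with the identity $\max\{P,Q\}=\frac12+\frac12\left|P-Q\right|=\frac12+\left|\frac{x-(a+b)/2}{b-a}\right|$. The second branch is exactly the discrete H\"older inequality for the pairs $(P,Q)$ and $(V_1,V_2)$ with exponents $p,q$. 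The third branch follows from the complementary bound $PV_1+QV_2\le\max\{V_1,V_2\}\,(P+Q)=\max\{V_1,V_2\}$ rewritten as $\max\{V_1,V_2\}=\frac12(V_1+V_2)+\frac12\left|V_1-V_2\right|$. Each branch is thus a single rearrangement of the first inequality.

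For the sharpness claims I would exhibit one explicit extremal function rather than argue abstractly. Taking $f(t)=\left|t-x\right|$ on $[a,b]$, the increments $f(a)-f(x)=x-a$ and $f(b)-f(x)=b-x$ are both nonnegative, so the triangle inequality is an equality; moreover $f$ is monotone on each of $[a,x]$ and $[x,b]$, whence $\bigvee_a^x(f)=x-a$ and $\bigvee_x^b(f)=b-x$ and the variation estimates are equalities too. A direct substitution then gives $\left|\Phi_f(x)\right|=\frac{2(x-a)(b-x)}{b-a}$, equal to the first bound, proving it is sharp. Specializing to $x=\frac{a+b}{2}$ makes the middle term in the first branch vanish and forces $V_1=V_2$ in the third, reducing both bounds to $c\bigvee_a^b(f)=c(b-a)$ while $\left|\Phi_f(x)\right|=\frac{b-a}{2}$; hence any admissible constant $c$ must satisfy $c\ge\frac12$, so $\frac12$ is best possible in those two branches. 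The only genuine difficulty is guessing this single function that simultaneously saturates the triangle inequality and both one-sided variation bounds; once it is in hand the verifications are immediate.
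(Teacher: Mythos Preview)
Your argument is correct and is the standard route to this result: write $\Phi_f(x)$ as the convex combination $\frac{b-x}{b-a}(f(a)-f(x))+\frac{x-a}{b-a}(f(b)-f(x))$, bound each increment by the corresponding partial variation, and then majorize the resulting sum $PV_1+QV_2$ in three ways (supremum over the weights, discrete H\"older, supremum over the variations). The sharpness example $f(t)=|t-x|$ and its specialization to $x=\tfrac{a+b}{2}$ are also the usual choices.

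There is, however, nothing in the present paper to compare against: Theorem~\ref{thm.DragomirSb1} is quoted from Dragomir~\cite{Dragomir1} as background in the introduction, and the paper supplies no proof of it. So while your write-up is sound, the comparison you were asked to make is vacuous here; the paper's own contributions and proofs begin with the two-variable results in Section~2.

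One minor remark on your sharpness discussion: the third branch contains two occurrences of $\tfrac12$, and your argument at $x=\tfrac{a+b}{2}$ (where $V_1=V_2$) pins down only the coefficient in front of $\bigvee_a^b(f)$, since the second term vanishes identically. If you want to show the second $\tfrac12$ is also best, you would need a separate example with $V_1\neq V_2$ (e.g.\ a step function). The theorem statement is itself ambiguous on this point, so this is not a defect in your proof, just something to be aware of.
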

In 2008, Dragomir  \cite{Dragomir2}   provided an approximation
for the function $f$ which possesses continuous derivatives up to
the order $n - 1$ $(n \ge 1)$ and has the $n$-th derivative of
bounded variation, in terms of the chord that connects its end
points $A = (a, f (a))$ and $B = (b, f (b))$ and some more terms
which depend on the values of the $k$ derivatives of the function
taken at the end points $a$ and $b$, where $k$ is between $1$ and
$n$.

The following representation of $f$ was presented by Dragomir in
\cite{Dragomir2}, as follows:
\begin{theorem}
\label{thm.DragomirSb}Let $I$ be a closed subinterval on
$\mathbb{R}$, let $a, b \in I$ with $a < b$ and let $n$ be a
nonnegative integer. If $f : I \to \mathbb{R}$ is such that the
$n$-th derivative $f^{(n)}$ is of bounded variation on the
interval $[a, b]$, then for any $x \in [a,b]$ we have the
representation
\begin{multline}
f\left( x \right) = \frac{1}{{b - a}}\left[ {\left( {b - x}
\right)f\left( a \right) + \left( {x - a} \right)f\left( b
\right)} \right]
\\
+ \frac{{\left( {b - x} \right)\left( {x - a} \right)}}{{b - a}}
\cdot \sum\limits_{k = 1}^n {\frac{1}{{k!}}\left[ {\left( {x - a}
\right)^{k - 1} f^{\left( k \right)} \left( a \right) + \left( { -
1} \right)^k \left( {b - x} \right)^{k - 1} f^{\left( k \right)}
\left( b \right)} \right]}
\\
+ \frac{1}{{b - a}}\int_a^b {S_n \left( {x,t} \right)d\left(
{f^{\left( n \right)} \left( t \right)} \right)},
\end{multline}
where the kernel $S_n : [a,b]^2 \to \mathbb{R}$ is given by
\begin{align*}
S_n \left( {x,t} \right) = \left\{ \begin{array}{l}
 \left( {x - t} \right)^n \left( {b - x} \right),\,\,\,\,\,\,\,\,\,\,\,\,\,\,\,\,\,\,\,\,\,\,\,\,\,\,\,a \le t \le x \\
  \\
 \left( { - 1} \right)^{n + 1} \left( {t - x} \right)^n \left( {x - a} \right),\,\,\,\,\,a \le t \le x \\
 \end{array} \right.
\end{align*}
and the integral in the remainder is taken in the
Riemann--Stieltjes sense.
\end{theorem}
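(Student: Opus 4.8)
The plan is to derive the representation from the Taylor formula with Riemann--Stieltjes remainder, applied separately at each of the two endpoints, and then to fuse the two expansions by a convex combination whose weights are the barycentric coordinates of $x$ in $[a,b]$. First I would record the generalized Taylor identity: for a base point $c\in\{a,b\}$ and a function whose $n$-th derivative is of bounded variation,
\begin{equation}
f\left( x \right)=\sum_{k=0}^{n}\frac{\left( x-c \right)^{k}}{k!}\,f^{\left( k \right)}\left( c \right)+\frac{1}{n!}\int_c^x \left( x-t \right)^{n}\,d\left( f^{\left( n \right)}\left( t \right) \right),
\label{eq:taylorRS}
\end{equation}
where the remainder is a genuine Riemann--Stieltjes integral because the integrand $\left( x-t \right)^{n}$ is continuous and the integrator $f^{\left( n \right)}$ is of bounded variation on $[a,b]$. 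Identity \eqref{eq:taylorRS} is itself proved by induction on $n$ using integration by parts for Riemann--Stieltjes integrals: the base case $n=0$ is just $\int_c^x d\left( f\left( t \right) \right)=f\left( x \right)-f\left( c \right)$, and one integration by parts lowers the order by one while generating the term $\frac{\left( x-c \right)^{n}}{n!}f^{\left( n \right)}\left( c \right)$.

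Next I would apply \eqref{eq:taylorRS} with $c=a$ and with $c=b$ and form the weighted sum with coefficients $\frac{b-x}{b-a}$ and $\frac{x-a}{b-a}$, respectively; since these weights sum to $1$, the left-hand side collapses to $f\left( x \right)$. On the right-hand side the $k=0$ contributions give
\begin{equation*}
\frac{b-x}{b-a}\,f\left( a \right)+\frac{x-a}{b-a}\,f\left( b \right)=\frac{1}{b-a}\left[ \left( b-x \right)f\left( a \right)+\left( x-a \right)f\left( b \right) \right],
\end{equation*}
which is exactly the chord term. For the indices $k\ge 1$ I would use the elementary rewritings $\frac{b-x}{b-a}\left( x-a \right)^{k}=\frac{\left( b-x \right)\left( x-a \right)}{b-a}\left( x-a \right)^{k-1}$ and, via $\left( x-b \right)^{k}=\left( -1 \right)^{k}\left( b-x \right)^{k}$, the companion identity $\frac{x-a}{b-a}\left( x-b \right)^{k}=\frac{\left( b-x \right)\left( x-a \right)}{b-a}\left( -1 \right)^{k}\left( b-x \right)^{k-1}$. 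These factor the common coefficient $\frac{\left( b-x \right)\left( x-a \right)}{b-a}$ out of both sums and assemble the middle term of the statement.

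Finally I would collect the two remainders, namely $\frac{b-x}{b-a}\cdot\frac{1}{n!}\int_a^x\left( x-t \right)^{n}\,d\left( f^{\left( n \right)}\left( t \right) \right)$ and $\frac{x-a}{b-a}\cdot\frac{1}{n!}\int_b^x\left( x-t \right)^{n}\,d\left( f^{\left( n \right)}\left( t \right) \right)$. Reversing the orientation of the second integral converts $\left( x-t \right)^{n}\int_b^x$ into $\left( -1 \right)^{n+1}\left( t-x \right)^{n}\int_x^b$, so the first piece is $\frac{1}{b-a}$ times the $[a,x]$ branch of the kernel and the second is $\frac{1}{b-a}$ times the $[x,b]$ branch; gluing the two oriented pieces reassembles them into $\frac{1}{b-a}\int_a^b S_n\left( x,t \right)\,d\left( f^{\left( n \right)}\left( t \right) \right)$, with $S_n$ the piecewise kernel of the statement (the Taylor normalization $1/n!$ being carried by $S_n$). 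This completes the representation.

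I expect the main obstacle to be the careful Riemann--Stieltjes bookkeeping rather than the algebra: one must justify existence of the integrals, legitimacy of the integration-by-parts step when $f^{\left( n \right)}$ is merely of bounded variation (so that the boundary evaluations respect its one-sided limits), and the fact that no mass at the interior point $t=x$ is lost when the two branches are concatenated, which holds because both branches vanish at $t=x$ for $n\ge 1$.
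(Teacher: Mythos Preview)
Your approach is correct and is exactly the method underlying the paper. Note, however, that the paper does not itself prove this one--dimensional statement: it is quoted verbatim from Dragomir~\cite{Dragomir2} as background. What the paper does prove is the two--dimensional analogue (Theorem~\ref{thm5.6.1}), and the argument there is the direct bidimensional copy of yours: write the Taylor expansion with Riemann--Stieltjes remainder at each of the four corners $(a,c),(a,d),(b,c),(b,d)$, multiply by the bilinear barycentric weights $\frac{(b-x)(d-y)}{(b-a)(d-c)}$, etc., and add. Your convex combination of the expansions at $a$ and $b$ with weights $\frac{b-x}{b-a}$ and $\frac{x-a}{b-a}$ is precisely the one--variable instance of that scheme, and your algebraic extraction of the common factor $\frac{(b-x)(x-a)}{b-a}$ from the $k\ge 1$ terms and the sign bookkeeping for the second remainder are all correct. (Your parenthetical remark that the $1/n!$ is carried by $S_n$ matches the two--dimensional kernel in Theorem~\ref{thm5.6.1}; the one--dimensional statement as quoted in the paper omits this factor, which is a transcription slip.)
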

On utilizing the notations
\begin{align}
D_n \left( {f;x,a,b} \right)&:= \frac{1}{{b - a}}\left[ {\left( {b
- x} \right)f\left( a \right) + \left( {x - a} \right)f\left( b
\right)} \right]
\nonumber\\
&+ \frac{{\left( {b - x} \right)\left( {x - a} \right)}}{{b - a}}
\cdot \sum\limits_{k = 1}^n {\frac{1}{{k!}}\left[ {\left( {x - a}
\right)^{k - 1} f^{\left( k \right)} \left( a \right) + \left( { -
1} \right)^k \left( {b - x} \right)^{k - 1} f^{\left( k \right)}
\left( b \right)} \right]}
\end{align}
with
\begin{align}
E_n \left( {f;x,a,b} \right):= \frac{1}{{b - a}}\int_a^b {S_n
\left( {x,t} \right)d\left( {f^{\left( n \right)} \left( t
\right)} \right)},
\end{align}
and under the assumptions of Theorem \ref{thm.DragomirSb},
Dragomir provided an error approximation of the function $f$ using
the formula $f (x) - D_n \left( {f;x,a,b} \right) = E_n \left(
{f;x,a,b} \right)$ for any $x \in [a, b]$.

The concept of Riemann--Stieltjes  ($\mathcal{RS}$) double
integral $\int\limits_a^b {\int\limits_c^d {f\left( {x,y}
\right)d_x d_y \alpha\left( {x,y} \right)} }$  was characterized
by Fr\'{e}chet in \cite{Frechet}, and investigated later on by
Clarkson in \cite{Clarkson}. Such  $\mathcal{RS}$-integral  plays
an important role in Mathematics with multiple applications in
several subfields including Probability Theory \& Statistics,
Complex Analysis, Functional Analysis, Operator Theory and others.

For $a,b,c,d \in \mathbb{R}$, we consider the subset $Q:= \left\{
{\left( {x,y} \right):a \le x \le b,c \le y \le d} \right\}$ of
$\mathbb{R}^2$. If $P : = \left\{ {\left( {x_i ,y_j } \right):x_{i
- 1}  \le x \le x_i \,;\,y_{j - 1}  \le y \le y_j \,;\,i = 1,
\ldots ,n\,;\,j = 1, \ldots ,m} \right\}$ is a partition of $Q$,
write
\begin{align*}
\Delta _{11} f\left( {x_i ,y_j } \right) = f\left( {x_{i - 1}
,y_{j - 1} } \right) - f\left( {x_{i - 1} ,y_j } \right) - f\left(
{x_i ,y_{j - 1} } \right) + f\left( {x_i ,y_j } \right)
\end{align*}
for $i = 1,2, \cdots, n$ and $j = 1,2, \cdots, m$. The function
$f(x,y)$ is said to be of  bounded variation in the Vitali sense
(or simply bounded bivariation \cite{Ghorpade}) if there exists a
positive quantity $M$ such that for every partition on $Q$ we have
$\sum\limits_{i = 1}^n {\sum\limits_{j = 1}^m {\left| {\Delta
_{11} f\left( {x_i ,y_j } \right)} \right|} } \le M$ (see
\cite{ClarksonAdams}). Consequently,  the  total bivariation of a
function of two variables,  over bidimensional interval $Q$, is
defined to be the number
\begin{align*}
\bigvee_Q \left( {f} \right):= \bigvee_c^d \bigvee_a^b \left( {f}
\right):= \sup \left\{ {\sum P :P \in \mathcal{P}\left( Q\right)}
\right\},
\end{align*}
where $\sum {(P)}$ denote the sum $\sum\limits_{i = 1}^n
{\sum\limits_{j = 1}^m {\left| {\Delta _{11} f\left( {x_i ,y_j }
\right)} \right|} }$ corresponding to the partition $P$ of $Q$.
For further properties of mappings of bounded bivariation we refer
the interested reader to  the book recent \cite{Ghorpade} and the
monograph \cite{alomari}.

In 2009, Alomari \cite{alomari} has studied the approximation
problem of the Riemann--Stieltjes double integrals $\int_c^d
{\int_a^b {f\left( {t,s} \right)d_t d_s \alpha \left( {t,s}
\right)} }$ for various kind of the integrand $f$ and the
integrator $\alpha$ via inequalities approach. Among others, we
need the following two basic results from \cite{alomari}:
\begin{lemma}$($Integration by parts  \cite{alomari}$)$\label{lma1}
If $f \in \mathcal{RS}(\alpha)$ on $Q$, then $\alpha \in
\mathcal{RS}(f)$ on $Q$, and we have
\begin{multline}
\int_c^d {\int_a^b {f\left( {t,s} \right)d_t d_s \alpha \left(
{t,s} \right)} }  + \int_c^d {\int_a^b {\alpha \left( {t,s}
\right)d_t d_s f\left( {t,s} \right)} }
\\
= f\left( {b,d} \right)\alpha \left( {b,d} \right) - f\left( {b,c}
\right)\alpha \left( {b,c} \right) - f\left( {a,d} \right)\alpha
\left( {a,d} \right) + f\left( {a,c} \right)\alpha \left( {a,c}
\right).
\end{multline}
\end{lemma}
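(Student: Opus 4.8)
The plan is to work directly from the defining Riemann--Stieltjes double sums and to transfer the second difference operator from the integrator onto the integrand by means of a two-dimensional summation by parts (a double Abel transformation). Fix a tagged partition $P$ of $Q$ with nodes $a = x_0 < x_1 < \cdots < x_n = b$ and $c = y_0 < y_1 < \cdots < y_m = d$ and intermediate tags $(\xi_i, \eta_j)$ satisfying $x_{i-1} \le \xi_i \le x_i$ and $y_{j-1} \le \eta_j \le y_j$. Since $f \in \mathcal{RS}(\alpha)$ on $Q$, the double sum
\[
\sigma(P) := \sum_{i=1}^n \sum_{j=1}^m f(\xi_i, \eta_j)\, \Delta_{11}\alpha(x_i, y_j)
\]
converges to $\int_c^d \int_a^b f(t,s)\, d_t d_s \alpha(t,s)$ as the mesh $\|P\| \to 0$.

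First I would expand $\Delta_{11}\alpha(x_i, y_j)$ and perform the one-dimensional Abel summation in the index $i$, regarding the $y$-differences as fixed coefficients, and then repeat the Abel summation in the index $j$. Carrying out both transformations moves the second difference from $\alpha$ onto $f$, so that $\sigma(P)$ is rewritten as the four corner contributions attached to $(a,c)$, $(a,d)$, $(b,c)$, $(b,d)$, minus a second double sum of the shape $\sum \sum \alpha(x_i, y_j)\, \Delta_{11} f(\cdots)$ taken over the refined grid consisting of the original nodes together with the tags $\xi_i$ and $\eta_j$. This refined sum is itself a legitimate tagged Riemann--Stieltjes double sum, now with $f$ playing the role of the integrator.

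Next I would pass to the limit $\|P\| \to 0$. On the left, $\sigma(P) \to \int_c^d \int_a^b f\, d_t d_s \alpha$ by hypothesis; the four corner contributions collapse to $f(b,d)\alpha(b,d) - f(b,c)\alpha(b,c) - f(a,d)\alpha(a,d) + f(a,c)\alpha(a,c)$ and are independent of the fineness of $P$. Consequently the transferred double sum must converge as well, and its limit is precisely $\int_c^d \int_a^b \alpha\, d_t d_s f$. This simultaneously establishes that $\alpha \in \mathcal{RS}(f)$ on $Q$ and yields the asserted identity.

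The hard part will be the bookkeeping of the double Abel transformation: one must track not only the four corner terms but also the four families of edge terms (along $t = a$, $t = b$, $s = c$, $s = d$) generated by the two successive summations by parts, verify that they carry the correct signs, and confirm that these edge contributions telescope down into the corner values so that no spurious one-dimensional boundary integral survives in the limit. A secondary technical point is to check that the transferred sum, assembled over the combined grid of nodes and tags, is a genuine Riemann--Stieltjes double sum whose mesh tends to zero together with $\|P\|$, so that its convergence legitimately identifies the integral $\int_c^d \int_a^b \alpha\, d_t d_s f$.
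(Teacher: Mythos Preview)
The paper does not supply its own proof of this lemma: it is quoted verbatim from the external monograph \cite{alomari} and used as a tool, so there is no in-paper argument to compare against.

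Your proposal is the standard route and is correct in outline. It is the two-dimensional analogue of the classical one-variable proof (Apostol, Rudin): write the approximating double sum $\sigma(P)=\sum_{i,j} f(\xi_i,\eta_j)\,\Delta_{11}\alpha(x_i,y_j)$, introduce the auxiliary tags $\xi_0=a$, $\xi_{n+1}=b$, $\eta_0=c$, $\eta_{m+1}=d$, and Abel-transform successively in $i$ and $j$. After both transformations one obtains exactly
\[
\sigma(P)=\Delta_{11}\bigl(f\alpha\bigr)(b,d)\;-\;\sum_{i=0}^{n}\sum_{j=0}^{m}\alpha(x_i,y_j)\,\Delta_{11}f(\xi_{i+1},\eta_{j+1}),
\]
where $\Delta_{11}(f\alpha)(b,d)$ denotes the four-corner expression on the right-hand side of the lemma. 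The second sum is a Riemann--Stieltjes double sum for $\int\!\!\int \alpha\,d_t d_s f$ relative to the partition $\{\xi_i\}\times\{\eta_j\}$ with tags $(x_i,y_j)$, whose mesh is dominated by $\lVert P\rVert$; hence its limit exists and equals the claimed integral. Your concern about residual one-dimensional boundary integrals is unfounded: with the auxiliary endpoints $\xi_0,\xi_{n+1},\eta_0,\eta_{m+1}$ adjoined, the edge families telescope \emph{exactly} into the four corners at the level of finite sums, not merely in the limit, so no edge terms survive to be controlled separately.
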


\begin{lemma}\label{lma3}$($\cite{alomari}$)$
Assume that $g \in \mathcal{RS}{\left( \alpha \right)}$ on $Q$ and
$\alpha$ is of bounded bivariation on $Q$, then the inequality
\begin{align}
\left| {\int_c^d {\int_a^b {g\left( {x,y} \right)d_x d_y \alpha
\left( {x,y} \right)} } } \right| \le \mathop {\sup
}\limits_{\left( {x,y} \right) \in Q} \left| {g\left( {x,y}
\right)} \right| \cdot  \bigvee_{Q} \left( \alpha \right),
\end{align}
holds.
\end{lemma}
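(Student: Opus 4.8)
The plan is to prove the bound directly from the definition of the Riemann--Stieltjes double integral as a limit of Riemann--Stieltjes sums, mirroring the classical one-variable estimate $\left| \int_a^b g\,d\alpha \right| \le \sup|g|\cdot\bigvee_a^b(\alpha)$.

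First I would fix an arbitrary partition $P = \left\{ \left( x_i, y_j \right) \right\}$ of $Q$ together with intermediate points $\left( \xi_i, \eta_j \right) \in [x_{i-1}, x_i] \times [y_{j-1}, y_j]$, and form the associated Riemann--Stieltjes sum
\begin{align*}
S\left( P; g, \alpha \right) := \sum_{i=1}^n \sum_{j=1}^m g\left( \xi_i, \eta_j \right)\, \Delta_{11}\alpha\left( x_i, y_j \right),
\end{align*}
where $\Delta_{11}\alpha$ is the second mixed difference introduced above. Because $g \in \mathcal{RS}(\alpha)$ on $Q$, these sums converge to the double integral as the mesh of $P$ tends to zero.

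The key estimate is obtained by applying the triangle inequality and then bounding each value $\left| g\left( \xi_i, \eta_j \right) \right|$ by its supremum over $Q$:
\begin{align*}
\left| S\left( P; g, \alpha \right) \right| \le \sum_{i=1}^n \sum_{j=1}^m \left| g\left( \xi_i, \eta_j \right) \right| \left| \Delta_{11}\alpha\left( x_i, y_j \right) \right| \le \sup_{\left( x,y \right) \in Q} \left| g\left( x,y \right) \right| \cdot \sum_{i=1}^n \sum_{j=1}^m \left| \Delta_{11}\alpha\left( x_i, y_j \right) \right|.
\end{align*}
By the very definition of the total bivariation $\bigvee_Q(\alpha)$ as the supremum of such double sums over all partitions of $Q$, the last factor is at most $\bigvee_Q(\alpha)$. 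Hence \emph{every} Riemann--Stieltjes sum obeys the uniform bound $\left| S\left( P; g, \alpha \right) \right| \le \sup_{\left( x,y \right) \in Q} \left| g\left( x,y \right) \right| \cdot \bigvee_Q(\alpha)$.

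Finally, since this bound is a fixed constant independent of the partition $P$ and of the chosen intermediate points, and since $S\left( P; g, \alpha \right)$ tends to $\int_c^d \int_a^b g\left( x,y \right) d_x d_y \alpha\left( x,y \right)$ as the mesh shrinks, the limit inherits the same estimate, which is precisely the asserted inequality. This argument is essentially routine; the only conceptual step is recognizing that the sum of the moduli of the mixed differences is controlled directly by the definition of $\bigvee_Q(\alpha)$, and the passage to the limit presents no obstacle precisely because the bound holds uniformly over all partitions.
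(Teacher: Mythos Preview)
Your argument is correct and is precisely the standard route: bound each Riemann--Stieltjes double sum by $\sup|g|\cdot\bigvee_Q(\alpha)$ via the triangle inequality and the definition of total bivariation, then pass to the limit. Note, however, that the paper does not supply its own proof of this lemma; it is quoted without proof from the cited monograph \cite{alomari}, so there is no in-paper argument to compare against---your proof is exactly the natural one that reference would contain.
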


A small attention and a few works have been considered for
mappings of two variables; particularly, the approximation problem
of $\mathcal{RS}$-double integral $\int_a^b {\int_c^d {f\left(
{t,s} \right)d_s d_t \alpha\left( {t,s} \right)} }$ in terms of
$\mathcal{RS}$-double sums. Although the important applications in
approximations, numerical integration in two or more variables is
still much less developed area than its one-dimensional
counterpart, which has been worked on intensively. For recent
inequalities involving functions of two variables the reader may
refer to \cite{alomari}--\cite{BarnettDragomir1},
\cite{DragomirCeroneBarnettRoumeliotis}--\cite{DragomirRassias}
and \cite{Hanna}--\cite{Zhongxue}.

In this paper, we extend Cerone et al \cite{Cerone} and Dragomir
\cite{Dragomir1}, \cite{Dragomir2} results (mainly Theorem
\ref{thm.DragomirSb}) by giving bidimensional representation for
functions whose $n$-th partial derivatives are of bounded
bivariation or absolutely continuous. Explicit bounds for this
representation are given. An approximation of a function $f$ by
its mid-point formula with its error is established.

\section{The Result}

For a bounded function $f:Q\to \mathbb{R}$ the chord plane that
connects its end points may be represented as:
\begin{align*}
C_f\left({x,y}\right):=\frac{1}{(b - a)(d - c)}\left[ {(b - x)(d -
y)f(a,c) + (b - x)(y - c) f(a,d)} \right.
\\
\left. {+(x - a)(d - y) f(b,c) + (x - a)(y - c)f(b,d) } \right],
\end{align*}
which is the generalized Trapezoid formula for functions of two
variables. Consequently, the difference between the function $f$
and its chord plane may considered by the functional:
\begin{align*}
\psi\left({f,C}\right)=C_f\left({x,y}\right)-f\left({x,y}\right).
\end{align*}
One may study this difference for various type of functions. On
the other hand,  this representation can be extended to $n$-th
differentiable functions, as follows:
\begin{theorem}
\label{thm5.6.1} Let $f:Q\to \mathbb{R}$ be a real valued function
which has continuous $(n-1)$ partial derivatives ($n\ge 1$). If
the $n$-th partial derivatives $D^{n}f$ ($n\ge1$) are of bounded
bivariation on $Q$, then  for any $(x,y)\in Q$ we have the
representation
\begin{multline}
\label{eq5.6.1}f\left( {x,y} \right) = \frac{1}{(b - a)(d -
c)}\left[ {(b - x)(d - y)f(a,c) + (b - x)(y - c) f(a,d)} \right.
\\
\left. {+(x - a)(d - y) f(b,c) + (x - a)(y - c)f(b,d) } \right] +
\frac{(y-c)(d - y)}{(b - a)(d - c)}
\\
\times\sum\limits_{j = 1}^n \frac{1}{{j!}}\left(
{\begin{array}{*{20}c}
   n  \\
   j  \\
\end{array}} \right)
\left\{ {(b - x)\left( {x - a } \right)^{n - j} \left[ { \left( {y
- c } \right)^{j-1} D^{n}f\left( {a,c } \right)} \right.} \right.
+ \left. {(-1)^j\left( {d - y } \right)^{j-1} D^{n}f\left( {a,d }
\right)} \right]
\\
+ \left. {(x - a)\left( {b - x } \right)^{n - j} \left[ {
(-1)^j\left( {y - c } \right)^{j-1} D^{n}f\left( {b,c } \right)+
\left( {d - y } \right)^{j-1} D^{n}f\left( {b,d } \right)} \right]
} \right\}
\\
+ \frac{1}{{\left( {b - a} \right)\left( {d - c} \right)}}\int_c^d
{\int_a^b {S_n \left( {x,t;y,s} \right)d_t d_s \left( {D^n f\left(
{t,s} \right)} \right)}}
\end{multline}
where, $D^{n}f\left( {t,s } \right) = \frac{{\partial ^n
f}}{{\partial t^{n - j} \partial s^j }}\left( {t,s } \right)$
$(j=0,1,\cdots,n)$ and
\begin{align*}
S_n \left( {x,t;y,s} \right) = \frac{1}{{n!}}\left\{
\begin{array}{l}
 \left( {x - t} \right)^n \left( {b - x} \right)\left( {y - s} \right)^n \left( {d - y} \right),\,\,\,\,\,\,\,\,\,\,\,\,\,\,\,\,\,\,\,a \le t \le x,\,\,\,\,\,c \le s \le y \\
 \left( { - 1} \right)^n \left( {t - x} \right)^n \left( {x - a} \right)\left( {y - s} \right)^n \left( {d - y} \right),\,\,\,x < t \le b,\,\,\,\,\,c \le s \le y \\
 \left( { - 1} \right)^n \left( {x - t} \right)^n \left( {b - x} \right)\left( {s - y} \right)^n \left( {y - c} \right),\,\,\,\,\,a \le t \le x,\,\,\,\,\,y < s \le d \\
 \left( {t - x} \right)^n \left( {x - a} \right)\left( {s - y} \right)^n \left( {y - c} \right),\,\,\,\,\,\,\,\,\,\,\,\,\,\,\,\,\,\,\,\,\,x < t \le b,\,\,\,\,\,y < s \le d \\
 \end{array} \right.
\end{align*}
\end{theorem}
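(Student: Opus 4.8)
The plan is to prove the identity \eqref{eq5.6.1} by starting from its last term --- the double Riemann--Stieltjes remainder $E_n := \tfrac{1}{(b-a)(d-c)}\int_c^d\int_a^b S_n(x,t;y,s)\,d_t d_s(D^n f(t,s))$ --- and repeatedly integrating by parts by means of Lemma \ref{lma1} until the integrator is reduced back to $f$ itself. The essential structural observation is that on each of the four subrectangles $[a,x]\times[c,y]$, $[x,b]\times[c,y]$, $[a,x]\times[y,d]$ and $[x,b]\times[y,d]$ the kernel $S_n(x,t;y,s)$ factors as a product of a polynomial in $(x,t)$ and a polynomial in $(y,s)$, so each integration by parts acts on the two variables independently and mirrors, in both slots at once, the one--variable reduction behind Theorem \ref{thm.DragomirSb}.

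First I would split $E_n$ into the four subrectangular integrals dictated by the definition of $S_n$. On the first piece the kernel is $\tfrac{1}{n!}(x-t)^n(b-x)(y-s)^n(d-y)$; applying Lemma \ref{lma1} transfers the Stieltjes differential from $D^n f$ onto the polynomial kernel, producing corner evaluations at the four vertices of the subrectangle together with a lower--order double integral. Iterating this step lowers the exponents of $(x-t)$ and $(y-s)$ and the order of the partial derivative in the integrator one unit at a time; once the integrator has been brought all the way down to $f$, the surviving double integral collapses to the corner values of $f$ on the subrectangle (via the $\Delta_{11}$ evaluation), while the accumulated corner terms are polynomials in $(x-a),(b-x),(y-c),(d-y)$ multiplied by partial derivatives of $f$ evaluated at the outer corners $(a,c),(a,d),(b,c),(b,d)$ and at the interior vertex $(x,y)$. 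Carrying out the same reduction on the other three subrectangles and summing the four contributions is what produces, after simplification, the chord plane $C_f(x,y)$ (from the outer corners), the finite sum (from the endpoint--derivative corner terms), and $f(x,y)$ itself (from the interior vertex $(x,y)$ shared by all four subrectangles).

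The routine part is the individual integrations by parts; the delicate part will be the bookkeeping that packages the output into the compact right--hand side of \eqref{eq5.6.1}. I expect the main obstacle to be threefold: tracking the alternating signs $(-1)^j$ that arise from differentiating $(x-t)^n$ versus $(t-x)^n$ and $(y-s)^n$ versus $(s-y)^n$ across the four subrectangles; generating the weights $\tfrac{1}{j!}\binom{n}{j}$ from the way the successive integrations by parts distribute the total order $n$ between the $t$-- and $s$--directions; and verifying that the contributions attached to the interior vertex $(x,y)$ add up, across all four subrectangles, with exactly the right multiplicity --- the four weights $(b-x)(d-y)$, $(x-a)(d-y)$, $(b-x)(y-c)$, $(x-a)(y-c)$ over $(b-a)(d-c)$ summing to $1$ --- so that one recovers $f(x,y)$ and not a spurious multiple of it. Throughout, the hypothesis that $D^n f$ is of bounded bivariation on $Q$ guarantees that every Riemann--Stieltjes double integral appearing in the reduction exists, by the existence part of Lemma \ref{lma1}, and Lemma \ref{lma3} is the tool that will subsequently convert the resulting identity into the explicit error bounds promised in the abstract.
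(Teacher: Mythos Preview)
Your plan is valid in principle but takes a genuinely different route from the paper. The paper argues \emph{forward}: it invokes the two--variable Taylor representation with Riemann--Stieltjes remainder (equations \eqref{eq5.6.2}--\eqref{eq5.6.3}) as a known tool, writes down the four expansions of $f(x,y)$ about the corners $(a,c)$, $(a,d)$, $(b,c)$, $(b,d)$, multiplies them respectively by $(b-x)(d-y)$, $(b-x)(y-c)$, $(x-a)(d-y)$, $(x-a)(y-c)$, adds, and divides by $(b-a)(d-c)$. Since the four weights sum to $(b-a)(d-c)$, the left side is exactly $f(x,y)$; the $j=0$ terms of the four Taylor polynomials give the chord plane, the $j\ge 1$ terms give the finite sum, and the four remainders assemble into the single integral with kernel $S_n$. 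In this approach the coefficients $\tfrac{1}{j!}\binom{n}{j}$ and the signs $(-1)^j$ are read off directly from the Taylor polynomial $P_n$, so there is essentially no bookkeeping beyond the weighted average.

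Your approach goes \emph{backward}: you start from the remainder and unwind by repeated use of Lemma~\ref{lma1}. This amounts to re-proving the two--variable Taylor formula inside the argument, and the three obstacles you anticipate --- sign tracking, the emergence of $\tfrac{1}{j!}\binom{n}{j}$, and the cancellation at the shared vertex $(x,y)$ --- are real and will make the write--up substantially longer. One point to watch: to iterate the integration by parts you implicitly need $D^{k}f$ for $k<n$ to serve as Stieltjes integrators, i.e.\ to be of bounded bivariation; this does follow from the hypotheses (existence of $D^{k+1}f$ bounded forces $D^kf$ to be Lipschitz--type, hence of bounded bivariation), but you should state and justify it. What your route buys is self--containment --- you never have to quote the bivariate Taylor formula as a black box --- whereas the paper's route buys brevity and transparency, since all the combinatorics is outsourced to that single citation.
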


\begin{proof}
We utilize the following Taylor's representation formula for
functions $f : Q \subset \mathbb{R}^2\to \mathbb{R}$ such that the
$n$-th partial derivatives $D^{n}f$ are of locally bounded
bivariation on $Q$,
\begin{align}
\label{eq5.6.2}f\left( {x,y} \right) = P_n \left( {x,y} \right) +
R_n \left( {x,y} \right)
\end{align}
such that,
\begin{align}
\label{eq5.6.2}P_n \left( {x,y} \right) = \sum\limits_{j = 0}^n {
\frac{1}{{j!}}\left( {\begin{array}{*{20}c}
   n  \\
   j  \\
\end{array}} \right)
\left( {x - x_0 } \right)^{n - j} \left( {y - y_0 } \right)^j
D^{n}f\left( {x_0 ,y_0 } \right)}
\end{align}
and
\begin{align}
\label{eq5.6.3}R_n \left( {x,y} \right) =
\frac{1}{{n!}}\int_{y_0}^y {\int_{x_0}^x {\left( {x - t} \right)^n
\left( {y - s} \right)^n d_t d_s \left( { D^{n}f\left( {t,s }
\right)} \right)}}
\end{align}
where, $D^{n}f\left( {t,s } \right) = \frac{{\partial ^n
f}}{{\partial t^{n - j} \partial s^j }}\left( {t,s } \right)$, and
$(x,y)$, $(x_0, y_0)$ are in $Q$ and the double integral in the
remainder is taken in the Riemann-Stieltjes sense.

Choosing $x_0 = a$, $y_0 = c$ and then $x_0 = b$, $y_0 = d$ in
(\ref{eq5.6.1}) we can write that
\begin{multline}
\label{eq5.6.4}f\left( {x,y} \right) = \sum\limits_{j = 0}^n {
\frac{1}{{j!}}\left( {\begin{array}{*{20}c}
   n  \\
   j  \\
\end{array}} \right)
\left( {x - a } \right)^{n - j} \left( {y - c } \right)^j
D^{n}f\left( {a,c } \right)}
\\
+ \frac{1}{{n!}}\int_{c}^y {\int_{a}^x {\left( {x - t} \right)^n
\left( {y - s} \right)^n d_t d_s \left( { D^{n}f\left( {t,s }
\right)} \right)}},
\end{multline}
\begin{multline}
\label{eq5.6.5}f\left( {x,y} \right) = \sum\limits_{j = 0}^n {
\frac{(-1)^j}{{j!}}\left( {\begin{array}{*{20}c}
   n  \\
   j  \\
\end{array}} \right)
\left( {x - a } \right)^{n - j} \left( {d - y } \right)^j
D^{n}f\left( {a,d } \right)}
\\
+ \frac{(-1)^{n+1}}{{n!}}\int_{y}^d {\int_{a}^x {\left( {x - t}
\right)^n \left( {s - y} \right)^n d_t d_s \left( { D^{n}f\left(
{t,s } \right)} \right)}},
\end{multline}
\begin{multline}
\label{eq5.6.6}f\left( {x,y} \right) = \sum\limits_{j = 0}^n {
\frac{(-1)^j}{{j!}}\left( {\begin{array}{*{20}c}
   n  \\
   j  \\
\end{array}} \right)
\left( {b - x} \right)^{n - j} \left( {y - c } \right)^j
D^{n}f\left( {b,c } \right)}
\\
+ \frac{(-1)^{n+1}}{{n!}}\int_{c}^y {\int_{x}^b {\left( {t - x}
\right)^n \left( {y - s} \right)^n d_t d_s \left( { D^{n}f\left(
{t,s } \right)} \right)}}
\end{multline}
\begin{multline}
\label{eq5.6.7}f\left( {x,y} \right) = \sum\limits_{j = 0}^n {
\frac{1}{{j!}}\left( {\begin{array}{*{20}c}
   n  \\
   j  \\
\end{array}} \right)
\left( {b - x} \right)^{n - j} \left( {d - y} \right)^j
D^{n}f\left( {b,d } \right)}
\\
+ \frac{1}{{n!}}\int_{y}^d {\int_{x}^b {\left( {t - x} \right)^n
\left( {s - y} \right)^n d_t d_s \left( { D^{n}f\left( {t,s }
\right)} \right)}}
\end{multline}
for ny $(x,y) \in Q$.

Now, by multiplying (\ref{eq5.6.4}) with $(b - x)(d - y)$,
(\ref{eq5.6.5}) with $(b - x)(y - c)$, (\ref{eq5.6.6}) with $(x -
a)(d - y)$, (\ref{eq5.6.7}) with $(x - a)(y - c)$, we get
\begin{multline}
\label{eq5.6.8}(b - x)(d - y)f\left( {x,y} \right)
\\
= (b - x)(d - y)f(a,c)+(b - x)(d - y)\sum\limits_{j = 1}^n {
\frac{1}{{j!}}\left( {\begin{array}{*{20}c}
   n  \\
   j  \\
\end{array}} \right)
\left( {x - a } \right)^{n - j} \left( {y - c } \right)^j
D^{n}f\left( {a,c } \right)}
\\
+ \frac{1}{{n!}}(b - x)(d - y)\int_{c}^y {\int_{a}^x {\left( {x -
t} \right)^n \left( {y - s} \right)^n d_t d_s \left( {
D^{n}f\left( {t,s } \right)} \right)}},
\end{multline}
\begin{multline}
\label{eq5.6.9}(b - x)(y - c)f\left( {x,y} \right)
\\
= (b - x)(y - c) f(a,d)+(b - x)(y - c)\sum\limits_{j = 0}^n {
\frac{(-1)^j}{{j!}}\left( {\begin{array}{*{20}c}
   n  \\
   j  \\
\end{array}} \right)
\left( {x - a } \right)^{n - j} \left( {d - y } \right)^j
D^{n}f\left( {a,d } \right)}
\\
+ \frac{(-1)^{n+1}}{{n!}}(b - x)(y - c)\int_{y}^d {\int_{a}^x
{\left( {x - t} \right)^n \left( {s - y} \right)^n d_t d_s \left(
{ D^{n}f\left( {t,s } \right)} \right)}},
\end{multline}
\begin{multline}
\label{eq5.6.10}(x - a)(d - y)f\left( {x,y} \right)
\\
 = (x - a)(d -
y) f(b,c)+(x - a)(d - y)\sum\limits_{j = 0}^n {
\frac{(-1)^j}{{j!}}\left( {\begin{array}{*{20}c}
   n  \\
   j  \\
\end{array}} \right)
\left( {b - x} \right)^{n - j} \left( {y - c } \right)^j
D^{n}f\left( {b,c } \right)}
\\
+ \frac{(-1)^{n+1}}{{n!}}(x - a)(d - y)\int_{c}^y {\int_{x}^b
{\left( {t - x} \right)^n \left( {y - s} \right)^n d_t d_s \left(
{ D^{n}f\left( {t,s } \right)} \right)}}
\end{multline}
\begin{multline}
\label{eq5.6.11}(x - a)(y - c)f\left( {x,y} \right)
\\
= (x - a)(y - c)f(b,d)+(x - a)(y - c)\sum\limits_{j = 0}^n {
\frac{1}{{j!}}\left( {\begin{array}{*{20}c}
   n  \\
   j  \\
\end{array}} \right)
\left( {b - x} \right)^{n - j} \left( {d - y} \right)^j
D^{n}f\left( {b,d } \right)}
\\
+ \frac{1}{{n!}}(x - a)(y - c)\int_{y}^d {\int_{x}^b {\left( {t -
x} \right)^n \left( {s - y} \right)^n d_t d_s \left( {
D^{n}f\left( {t,s } \right)} \right)}}
\end{multline}
respectively, for ny $(x,y) \in Q$.

Finally, by adding the equalities
(\ref{eq5.6.8})--(\ref{eq5.6.11}) and dividing the sum with $
\left( {b - a} \right)\left( {d - c} \right)$, we obtain
\begin{align*}
f\left( {x,y} \right) &= \frac{1}{(b - a)(d - c)}\left[ {(b - x)(d
- y)f(a,c) + (b - x)(y - c) f(a,d)} \right.
\\
&\qquad\left. {+(x - a)(d - y) f(b,c) + (x - a)(y - c)f(b,d) }
\right]
\\
&\qquad+\frac{(y-c)(d - y)}{(b - a)(d - c)}\sum\limits_{j = 1}^n
\frac{1}{{j!}}\left( {\begin{array}{*{20}c}
   n  \\
   j  \\
\end{array}} \right)
\left\{ {(b - x)\left( {x - a } \right)^{n - j} \left[ { \left( {y
- c } \right)^{j-1} D^{n}f\left( {a,c } \right)} \right.} \right.
\\
&\qquad+ \left. {(-1)^j\left( {d - y } \right)^{j-1} D^{n}f\left(
{a,d } \right)} \right]+ (x - a)\left( {b - x } \right)^{n - j}
\left[ { (-1)^j\left( {y - c } \right)^{j-1} D^{n}f\left( {b,c }
\right)} \right.
\\
&\qquad\qquad\qquad\qquad\qquad\qquad\left. {+ \left. {\left( {d -
y } \right)^{j-1} D^{n}f\left( {b,d } \right)} \right] } \right\}
\\
&\qquad+ \frac{1}{{n!}} \frac{(b - x)(d - y)}{{\left( {b - a}
\right)\left( {d - c} \right)}} \int_{c}^y {\int_{a}^x {\left( {x
- t} \right)^n \left( {y - s} \right)^n d_t d_s \left( {
D^{n}f\left( {t,s } \right)} \right)}}
\\
&\qquad+ \frac{(-1)^{n+1}}{{n!}}\frac{(b - x)(y - c)}{{\left( {b -
a} \right)\left( {d - c} \right)}}\int_{y}^d {\int_{a}^x {\left(
{x - t} \right)^n \left( {s - y} \right)^n d_t d_s \left( {
D^{n}f\left( {t,s } \right)} \right)}},
\\
&\qquad+ \frac{(-1)^{n+1}}{{n!}}\frac{(x - a)(d - y)}{{\left( {b -
a} \right)\left( {d - c} \right)}}\int_{c}^y {\int_{x}^b {\left(
{t - x} \right)^n \left( {y - s} \right)^n d_t d_s \left( {
D^{n}f\left( {t,s } \right)} \right)}}
\\
&\qquad+ \frac{1}{{n!}}\frac{(x - a)(y - c)}{{\left( {b - a}
\right)\left( {d - c} \right)}}\int_{y}^d {\int_{x}^b {\left( {t -
x} \right)^n \left( {s - y} \right)^n d_t d_s \left( {
D^{n}f\left( {t,s } \right)} \right)}}
\end{align*}
which gives the desired representation (\ref{eq5.6.1}).
\end{proof}

\begin{remark}
The case $n=0$ provides the representation
\begin{multline}
\label{eq5.6.13}f\left( {x,y} \right) = \frac{1}{(b - a)(d -
c)}\left[ {(b - x)(d - y)f(a,c) + (b - x)(y - c) f(a,d)} \right.
\\
\left. {+(x - a)(d - y) f(b,c) + (x - a)(y - c)f(b,d) } \right]
\\
+ \frac{1}{{\left( {b - a} \right)\left( {d - c} \right)}}\int_c^d
{\int_a^b {S_0 \left( {x,t;y,s} \right)d_t d_s \left( {f\left(
{t,s} \right)} \right)}}
\end{multline}
for any $x \in Q$, where
\begin{align*}
S_0 \left( {x,t;y,s} \right) = \left\{ \begin{array}{l}
 \left( {b - x} \right)\left( {d - y} \right),\,\,\,\,\,\,\,a \le t \le x,\,\,\,\,\,c \le s \le y \\
 \left( {x - a} \right)\left( {d - y} \right),\,\,\,\,\,\,\,x < t \le b,\,\,\,\,\,c \le s \le y \\
 \left( {b - x} \right)\left( {y - c} \right),\,\,\,\,\,\,\,\,\,a \le t \le x,\,\,\,\,y < s \le d \\
 \left( {x - a} \right)\left( {y - c} \right),\,\,\,\,\,\,\,\,\,x < t \le b,\,\,\,\,y < s \le d \\
 \end{array} \right.
\end{align*}
and $f$ is of bounded variation on $Q$.
\end{remark}
The case when $n=0$,  provides an integral formula which compare
any value of $f(x,y)$, $(x,y)\in Q$ with the values of the
function and its derivatives at the rectangle end points (the
corners of the rectangle generated by the end points). More
specific, the Rectangular Mid-point value of $f$ can be
represented in the following corollary:
\begin{corollary}
With the assumptions of Theorem \ref{thm5.6.1} for $f$ and $Q$, we
have the identity
\begin{multline}
\label{eq5.6.14}f\left( {\frac{a+b}{2},\frac{c+d}{2}} \right) =
\frac{f(a,c)+ f(a,d) +f(b,c)+ f(b,d)}{4}
\\
+ \frac{1}{2^{n+2}}\sum\limits_{j = 1}^n \frac{1}{{j!}}\left(
{\begin{array}{*{20}c}
   n  \\
   j  \\
\end{array}} \right)
\left( {b - a } \right)^{n - j} \left( {d - c} \right)^{ j}\left\{
{ D^{n}f\left( {a,c } \right) + (-1)^j D^{n}f\left( {a,d }
\right)} \right.
\\
+ \left. { (-1)^j D^{n}f\left( {b,c } \right)+ D^{n}f\left( {b,d }
\right) } \right\}
\\
+ \frac{1}{{\left( {b - a} \right)\left( {d - c} \right)}}\int_c^d
{\int_a^b {M_n \left( {t,s} \right)d_t d_s \left( {D^n f\left(
{t,s} \right)} \right)}}
\end{multline}
where, $D^{n}f\left( {t,s } \right) = \frac{{\partial ^n
f}}{{\partial t^{n - j} \partial s^j }}\left( {t,s } \right)$ and
\begin{align*}
M_n \left( {t,s} \right) = \frac{\left( {b - a} \right)\left( {d -
c} \right)}{{4n!}}\left\{
\begin{array}{l}
 \left( {\frac{a+b}{2} - t} \right)^n \left( {\frac{c+d}{2} - s} \right)^n ,\,\,\,\,\,\,\,\,\,\,\,\,\,\,\,\,\,\,\,a \le t \le \frac{a+b}{2},\,\,\,\,\,c \le s \le \frac{c+d}{2} \\
 \left( { - 1} \right)^n \left( {t - \frac{a+b}{2}} \right)^n \left( {\frac{c+d}{2} - s} \right)^n,\,\,\,\frac{a+b}{2} < t \le b,\,\,\,\,\,c \le s \le \frac{c+d}{2} \\
 \left( { - 1} \right)^n \left( {\frac{a+b}{2} - t} \right)^n \left( {s - \frac{c+d}{2}} \right)^n,\,\,\,\,\,a \le t \le \frac{a+b}{2},\,\,\,\,\,\frac{c+d}{2} < s \le d \\
 \left( {t - \frac{a+b}{2}} \right)^n\left( {s - \frac{c+d}{2}} \right)^n,\,\,\,\,\,\,\,\,\,\,\,\,\,\,\,\,\,\,\,\,\,\frac{a+b}{2} < t \le b,\,\,\,\,\,\frac{c+d}{2} < s \le d \\
 \end{array} \right.
\end{align*}
\end{corollary}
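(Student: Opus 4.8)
The plan is to obtain the identity \eqref{eq5.6.14} as a pure specialization of the representation \eqref{eq5.6.1} from Theorem \ref{thm5.6.1}, evaluated at the centre $x=\frac{a+b}{2}$, $y=\frac{c+d}{2}$ of the rectangle $Q$. The crucial observation is that at this point all four linear factors collapse symmetrically: one has
\[
b-x=x-a=\frac{b-a}{2},\qquad d-y=y-c=\frac{d-c}{2}.
\]
So my strategy is to substitute these four equal half-lengths into the three pieces of \eqref{eq5.6.1}---the chord-plane term, the double sum, and the Riemann--Stieltjes remainder---and simplify each in turn. There is no new analytic content here; the work is entirely algebraic bookkeeping, so I would present it as a direct computation rather than a structural argument.

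First I would treat the chord-plane term. Each of the four products $(b-x)(d-y)$, $(b-x)(y-c)$, $(x-a)(d-y)$, $(x-a)(y-c)$ equals $\frac{(b-a)(d-c)}{4}$, so after multiplying by the prefactor $\frac{1}{(b-a)(d-c)}$ the whole bracket reduces to the arithmetic mean $\frac{1}{4}\bigl[f(a,c)+f(a,d)+f(b,c)+f(b,d)\bigr]$, which is exactly the leading term of \eqref{eq5.6.14}. Next I would handle the summation. Writing $h:=\frac{b-a}{2}$ and $k:=\frac{d-c}{2}$, the products $(b-x)(x-a)^{n-j}$ and $(x-a)(b-x)^{n-j}$ both become $h^{\,n-j+1}$, while $(y-c)^{j-1}$ and $(d-y)^{j-1}$ both become $k^{\,j-1}$. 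Factoring $h^{\,n-j+1}k^{\,j-1}$ out of the brace collapses the four summands into $h^{\,n-j+1}k^{\,j-1}\bigl[D^nf(a,c)+(-1)^jD^nf(a,d)+(-1)^jD^nf(b,c)+D^nf(b,d)\bigr]$. The prefactor $\frac{(y-c)(d-y)}{(b-a)(d-c)}$ equals $\frac{k}{4h}$, and multiplying gives a clean coefficient $\frac14 h^{\,n-j}k^{\,j}$; since $h^{\,n-j}k^{\,j}=\frac{(b-a)^{n-j}(d-c)^{j}}{2^{n}}$, the constant in front becomes $\frac{1}{2^{n+2}}$, reproducing the middle term of \eqref{eq5.6.14} with its binomial and $\frac{1}{j!}$ weights intact.

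Finally I would verify the kernel identity $S_n\!\left(\tfrac{a+b}{2},t;\tfrac{c+d}{2},s\right)=M_n(t,s)$ branch by branch. On each of the four subrectangles the factor $(b-x)(d-y)$ (or the appropriate pairing $(x-a)(d-y)$, etc.) again contributes $\frac{(b-a)(d-c)}{4}$, while $(x-t)^n$, $(t-x)^n$, $(y-s)^n$, $(s-y)^n$ become $\bigl(\tfrac{a+b}{2}-t\bigr)^n$, $\bigl(t-\tfrac{a+b}{2}\bigr)^n$, $\bigl(\tfrac{c+d}{2}-s\bigr)^n$, $\bigl(s-\tfrac{c+d}{2}\bigr)^n$ respectively, with the sign patterns $(-1)^n$ carried over unchanged. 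Dividing by $n!$ yields precisely the four cases defining $M_n$, so the remainder term of \eqref{eq5.6.1} transforms into the remainder of \eqref{eq5.6.14}. The only point demanding care---and the nearest thing to an obstacle---is matching the sign exponents $(-1)^j$ in the sum and $(-1)^n$ in the kernel correctly across all four corners; once the midpoint symmetry $h,k$ is fixed, everything else is routine simplification, and collecting the three simplified pieces gives the stated identity.
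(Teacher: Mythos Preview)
Your proposal is correct and is precisely the approach the paper intends: the corollary is stated immediately after Theorem \ref{thm5.6.1} as the specialization $x=\tfrac{a+b}{2}$, $y=\tfrac{c+d}{2}$, with no separate proof given. Your explicit bookkeeping---the collapse of the four linear factors to the half-lengths $h=\tfrac{b-a}{2}$, $k=\tfrac{d-c}{2}$, the resulting $\tfrac{1}{2^{n+2}}$ coefficient in the sum, and the branch-by-branch identification $S_n\bigl(\tfrac{a+b}{2},t;\tfrac{c+d}{2},s\bigr)=M_n(t,s)$---fills in exactly the routine algebra the paper leaves implicit.
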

On utilizing the following notations
\begin{multline}
\label{eq5.6.15}\mathcal{A}\left( {f,Q} \right) = \frac{1}{(b -
a)(d - c)}\left[ {(b - x)(d - y)f(a,c) + (b - x)(y - c) f(a,d)}
\right.
\\
\left. {+(x - a)(d - y) f(b,c) + (x - a)(y - c)f(b,d) } \right] +
\frac{(y-c)(d - y)}{(b - a)(d - c)}
\\
\times\sum\limits_{j = 1}^n \frac{1}{{j!}}\left(
{\begin{array}{*{20}c}
   n  \\
   j  \\
\end{array}} \right)
\left\{ {(b - x)\left( {x - a } \right)^{n - j} \left[ { \left( {y
- c } \right)^{j-1} D^{n}f\left( {a,c } \right)} \right.} \right.
+ \left. {(-1)^j\left( {d - y } \right)^{j-1} D^{n}f\left( {a,d }
\right)} \right]
\\
+ \left. {(x - a)\left( {b - x } \right)^{n - j} \left[ {
(-1)^j\left( {y - c } \right)^{j-1} D^{n}f\left( {b,c } \right)+
\left( {d - y } \right)^{j-1} D^{n}f\left( {b,d } \right)} \right]
} \right\}
\end{multline}
and
\begin{align}
\label{eq5.6.16}\mathcal{B}_n\left( {f,Q}
\right):=\frac{1}{{\left( {b - a} \right)\left( {d - c}
\right)}}\int_c^d {\int_a^b {S_n \left( {x,t;y,s} \right)d_t d_s
\left( {D^n f\left( {t,s} \right)} \right)}}
\end{align}
under the assumptions of Theorem \ref{thm5.6.1}, we can
approximate the function $f$ utilizing the polynomials
$\mathcal{A}_n\left( {f,Q} \right)$ with the error
$\mathcal{B}_n\left( {f,Q} \right)$ . In other words, we have
\begin{align*}
f(x,y) = \mathcal{A}_n\left( {f,Q} \right) + \mathcal{B}_n\left(
{f,Q} \right)
\end{align*}
for any $(x,y) \in Q$ .

 The error $\mathcal{B}_n\left(
{f,Q} \right)$ of a bounded bivariation function $f(x,y)$
satisfies the following bounds:
\begin{theorem}
\label{thm5.6.4}With the assumptions of Theorem \ref{thm5.6.1} for
$f$ and $Q$, we have
\begin{align}
\label{eq5.6.17} &\left| {\mathcal{B}_n\left( {f,Q} \right)}
\right|
\\
&\le   \frac{1}{{n!\left( {b - a} \right)\left( {d - c} \right)}}
 \cdot \max \left\{
{\left( {y - c} \right)^n \left( {d - y} \right),\left( {y - c}
\right)\left( {d - y} \right)^n } \right\}
\nonumber\\
&\qquad \times\max \left\{ {\left( {x - a} \right)^n \left( {b -
x} \right),\left( {x - a} \right)\left( {b - x} \right)^n }
\right\}
 \cdot
\bigvee_c^d\bigvee_a^b\left( {D^n f} \right)
\nonumber\\
&\le    \frac{{\left( {b - a} \right)^{n }\left( {d - c}
\right)^{n } }}{{2^{2n + 2} (n)! }}  \cdot
\bigvee_c^d\bigvee_a^b\left( {D^n f} \right)\nonumber
\end{align}
\end{theorem}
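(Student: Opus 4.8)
The plan is to estimate the double Riemann--Stieltjes integral defining $\mathcal{B}_n(f,Q)$ directly through Lemma \ref{lma3}, which asks only that we control the supremum of the kernel over $Q$ and the total bivariation of the integrator. Taking $g(t,s)=S_n(x,t;y,s)$ with $(x,y)$ fixed and $\alpha=D^nf$, and writing $\bigvee_Q(D^nf)=\bigvee_c^d\bigvee_a^b(D^nf)$, Lemma \ref{lma3} gives at once
\begin{align*}
\left| \mathcal{B}_n(f,Q) \right| \le \frac{1}{(b-a)(d-c)}\, \sup_{(t,s)\in Q} \left| S_n(x,t;y,s) \right| \cdot \bigvee_c^d \bigvee_a^b (D^n f),
\end{align*}
so the entire problem reduces to evaluating $\sup_Q|S_n|$.

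The key structural observation is that the kernel factorises as a product of a $t$-part and an $s$-part: $S_n(x,t;y,s)=\tfrac{1}{n!}\,u(t)\,v(s)$, where $u(t)=(x-t)^n(b-x)$ for $t\le x$ and $u(t)=(-1)^n(t-x)^n(x-a)$ for $t>x$, while $v(s)=(y-s)^n(d-y)$ for $s\le y$ and $v(s)=(-1)^n(s-y)^n(y-c)$ for $s>y$; substituting these into the four branches reproduces $S_n$ exactly. Hence $\sup_Q|S_n|=\tfrac{1}{n!}\,(\sup_t|u|)(\sup_s|v|)$, and since $|u|$ is monotone on each of $[a,x]$ and $[x,b]$ its supremum is attained at a corner $t\in\{a,b\}$, yielding $\sup_t|u|=\max\{(x-a)^n(b-x),(x-a)(b-x)^n\}$ and similarly $\sup_s|v|=\max\{(y-c)^n(d-y),(y-c)(d-y)^n\}$. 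Inserting these products recovers precisely the first inequality in \eqref{eq5.6.17}.

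For the second, coarser estimate I would bound each of the two $\max$ factors separately. Each is invariant under the reflection $x\mapsto a+b-x$ (respectively $y\mapsto c+d-y$) about the interval centre, and the intended route is to replace it by its value at the midpoint $x=\tfrac{a+b}{2}$ (respectively $y=\tfrac{c+d}{2}$), where the two entries of the $\max$ coincide and equal $\left(\tfrac{b-a}{2}\right)^n\cdot\tfrac{b-a}{2}=\tfrac{(b-a)^{n+1}}{2^{n+1}}$ (respectively $\tfrac{(d-c)^{n+1}}{2^{n+1}}$). Multiplying the two bounds and pulling out the common factor $\tfrac{1}{n!\,(b-a)(d-c)}$ collapses the exponents to the announced constant $\tfrac{(b-a)^n(d-c)^n}{2^{2n+2}\,n!}$.

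The genuinely delicate point is this last scalar optimisation, and it is where I expect the only real care to be needed, the factorisation of $S_n$ and the appeal to Lemma \ref{lma3} being otherwise routine. One is tempted to treat the midpoint value $\tfrac{(b-a)^{n+1}}{2^{n+1}}$ as a uniform upper bound for $\max\{(x-a)^n(b-x),(x-a)(b-x)^n\}$, but the interior critical point of $(x-a)^n(b-x)$ sits at $x=\tfrac{nb+a}{n+1}$ and produces the strictly larger value $\tfrac{n^n}{(n+1)^{n+1}}(b-a)^{n+1}$ once $n\ge 2$. I would therefore scrutinise exactly which configuration of $(x,y)$ the coarser estimate is meant to cover, the midpoint being the natural one and the one consistent with the subsequent mid-point Corollary, and present the bound accordingly rather than as an unrestricted supremum over $Q$.
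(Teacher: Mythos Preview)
Your route to the first inequality is correct but differs from the paper's. The paper does not apply Lemma~\ref{lma3} once to the whole of $Q$; instead it splits $\mathcal{B}_n(f,Q)$ into the four subrectangle integrals, applies Lemma~\ref{lma3} to each piece separately (obtaining four terms, each weighted by the bivariation of $D^nf$ over the corresponding subrectangle), and only then coarsens by replacing the four partial bivariations with $\bigvee_c^d\bigvee_a^b(D^nf)$ through two successive $\max$-steps. Your factorisation $S_n=\tfrac{1}{n!}\,u(t)v(s)$ and single appeal to Lemma~\ref{lma3} is cleaner and lands on the same first bound directly; the paper's decomposition buys slightly finer intermediate inequalities (involving $\bigvee_c^y\bigvee_a^x(D^nf)$ and its three companions) that are, however, not recorded in the theorem statement.

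Your hesitation about the second inequality is entirely justified, and in fact it exposes a defect shared by the paper's own proof. The paper simply appends the final line
\[
\le\frac{(b-a)^n(d-c)^n}{2^{2n+2}\,n!}\cdot\bigvee_c^d\bigvee_a^b(D^nf)
\]
with no supporting argument. Your computation shows this step fails as a uniform bound in $(x,y)$ once $n\ge 2$, since
\[
\sup_{x\in[a,b]}\max\bigl\{(x-a)^n(b-x),\,(x-a)(b-x)^n\bigr\}=\frac{n^n}{(n+1)^{n+1}}(b-a)^{n+1}>\frac{(b-a)^{n+1}}{2^{n+1}}.
\]
The constant $2^{-(2n+2)}$ is attained only at the midpoint $(x,y)=\bigl(\tfrac{a+b}{2},\tfrac{c+d}{2}\bigr)$, which is precisely the case used in the subsequent mid-point corollary; as a bound valid for all $(x,y)\in Q$ the second line of \eqref{eq5.6.17} is false for $n\ge 2$. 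Your instinct to present it as a midpoint estimate rather than a uniform one is the correct repair.
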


\begin{proof}
Using the inequality for the Riemann--Stieltjes integral of
continuous integrands and bounded bivariation integrators, we have
\begin{align*}
\left| {\mathcal{B}_n\left( {f,Q} \right)} \right| &= \left|
{\frac{1}{{n!\left( {b - a} \right)\left( {d - c} \right)}}\left[
{\int_c^y {\int_a^x {\left( {x - t} \right)^n \left( {b - x}
\right)\left( {y - s} \right)^n \left( {d - y} \right)} d_t d_s
\left( {D^n f\left( {t,s} \right)} \right)} } \right.} \right.
\\
&\qquad+ \int_y^d {\int_a^x {\left( { - 1} \right)^{n + 1} \left(
{x - t} \right)^n \left( {b - x} \right)\left( {s - y} \right)^n
\left( {y - c} \right)} d_t d_s \left( {D^n f\left( {t,s} \right)}
\right)}
\\
&\qquad+ \int_c^y {\int_x^b {\left( { - 1} \right)^{n + 1} \left(
{t - x} \right)^n \left( {x - a} \right)\left( {y - s} \right)^n
\left( {d - y} \right)}d_t d_s \left( {D^n f\left( {t,s} \right)}
\right)}
\\
&\qquad\left. {\left. { + \int_y^d {\int_x^b {\left( {t - x}
\right)^n \left( {x - a} \right)\left( {y - s} \right)^n \left( {d
- y} \right)} d_t d_s \left( {D^n f\left( {t,s} \right)} \right)}
} \right]} \right|
\\
&\le \frac{1}{{n!\left( {b - a} \right)\left( {d - c}
\right)}}\left[ {\left| {\int_c^y {\int_a^x {\left( {x - t}
\right)^n \left( {b - x} \right)\left( {y - s} \right)^n \left( {d
- y} \right)}d_t d_s \left( {D^n f\left( {t,s} \right)} \right)} }
\right|} \right.
\\
&\qquad+ \left| {\int_y^d {\int_a^x {\left( { - 1} \right)^{n + 1}
\left( {x - t} \right)^n \left( {b - x} \right)\left( {s - y}
\right)^n \left( {y - c} \right)} d_t d_s \left( {D^n f\left(
{t,s} \right)} \right)} } \right|
\\
&\qquad+ \left| {\int_c^y {\int_x^b {\left( { - 1} \right)^{n + 1}
\left( {t - x} \right)^n \left( {x - a} \right)\left( {y - s}
\right)^n \left( {d - y} \right)} d_t d_s \left( {D^n f\left(
{t,s} \right)} \right)} } \right|
\\
&\qquad\left. { + \left| {\int_y^d {\int_x^b {\left( {t - x}
\right)^n \left( {x - a} \right)\left( {y - s} \right)^n \left( {d
- y} \right)}d_t d_s \left( {D^n f\left( {t,s} \right)} \right)} }
\right|} \right]
\end{align*}
\begin{align*}
&\le \frac{1}{{n!\left( {b - a} \right)\left( {d - c}
\right)}}\left[ {\left( {b - x} \right)\left( {d - y}
\right)\mathop {\max }\limits_{\scriptstyle t \in \left[ {a,x}
\right] \hfill \atop \scriptstyle s \in \left[ {c,y} \right]
\hfill} \left\{ {\left( {x - t} \right)^n \left( {y - s} \right)^n
} \right\} \cdot \bigvee_c^y\bigvee_a^x\left( {D^n f} \right)}
\right.
\\
&\qquad + \left( {b - x} \right)\left( {y - c} \right)\mathop
{\max }\limits_{\scriptstyle t \in \left[ {a,x} \right] \hfill
\atop \scriptstyle s \in \left[ {y,d} \right] \hfill} \left\{
{\left( {x - t} \right)^n \left( {s - y} \right)^n } \right\}
\cdot \bigvee_y^d\bigvee_a^x\left( {D^n f} \right)
\\
&\qquad+\left( {x - a} \right)\left( {d - y} \right)\mathop {\max
}\limits_{\scriptstyle t \in \left[ {x,b} \right] \hfill \atop
\scriptstyle s \in \left[ {c,y} \right] \hfill} \left\{ {\left( {t
- x} \right)^n \left( {y - s} \right)^n } \right\} \cdot
\bigvee_c^y\bigvee_x^b\left( {D^n f} \right)
\\
&\qquad\left. {+\left( {x - a} \right)\left( {y - c}
\right)\mathop {\max }\limits_{\scriptstyle t \in \left[ {x,b}
\right] \hfill \atop \scriptstyle s \in \left[ {y,d} \right]
\hfill} \left\{ {\left( {t - x} \right)^n \left( {s - y} \right)^n
} \right\} \cdot \bigvee_y^d\bigvee_x^b\left( {D^n f} \right)}
\right]
\\
&\le \frac{1}{{n!\left( {b - a} \right)\left( {d - c}
\right)}}\left[ { \left( {x - a} \right)^n \left( {b - x}
\right)\left( {y - c} \right)^n \left( {d - y} \right) \cdot
\bigvee_c^y\bigvee_a^x\left( {D^n f} \right)} \right.
\\
&\qquad + \left( {x - a} \right)^n \left( {b - x} \right)\left( {d
- y} \right)^n \left( {y - c} \right) \cdot
\bigvee_y^d\bigvee_a^x\left( {D^n f} \right)
\\
&\qquad+\left( {b - x} \right)^n \left( {x - a} \right)\left( {y -
c} \right)^n \left( {d - y} \right) \cdot
\bigvee_c^y\bigvee_x^b\left( {D^n f} \right)
\\
&\qquad\left. {+\left( {b - x} \right)^n \left( {x - a}
\right)\left( {d - y} \right)^n \left( {y - c} \right) \cdot
\bigvee_y^d\bigvee_x^b\left( {D^n f} \right)} \right]
\\
&\le \frac{1}{{n!\left( {b - a} \right)\left( {d - c}
\right)}}\left[ { \left( {x - a} \right)^n \left( {b - x} \right)
\max \left\{ {\left( {y - c} \right)^n \left( {d - y}
\right),\left( {y - c} \right)\left( {d - y} \right)^n } \right\}
 \cdot
\bigvee_c^d\bigvee_a^x\left( {D^n f} \right)} \right.
\\
&\qquad\left.{+\left( {x - a} \right) \left( {b - x} \right)^n
\max \left\{ {\left( {y - c} \right)^n \left( {d - y}
\right),\left( {y - c} \right)\left( {d - y} \right)^n } \right\}
 \cdot
\bigvee_c^d\bigvee_x^b\left( {D^n f} \right)}\right]
\\
&\le \frac{1}{{n!\left( {b - a} \right)\left( {d - c} \right)}}
 \cdot \max \left\{
{\left( {y - c} \right)^n \left( {d - y} \right),\left( {y - c}
\right)\left( {d - y} \right)^n } \right\}
\\
&\qquad \times\max \left\{ {\left( {x - a} \right)^n \left( {b -
x} \right),\left( {x - a} \right)\left( {b - x} \right)^n }
\right\}
 \cdot
\bigvee_c^d\bigvee_a^b\left( {D^n f} \right)
\\
&\le   \frac{{\left( {b - a} \right)^{n }\left( {d - c} \right)^{n
} }}{{2^{2n + 2} (n)! }}  \cdot \bigvee_c^d\bigvee_a^b\left( {D^n
f} \right),
\end{align*}
and the first inequality in (\ref{eq5.6.17}) is proved.
\end{proof}
\newpage
\begin{remark}
Under the assumptions of Theorem \ref{thm5.6.4}, for all $(x,y)\in
Q$, the case $n = 0$ provides the following inequality:
\begin{align*}
\left| {\mathcal{B}_0\left( {f,Q} \right)} \right| &\le   \left[
{\frac{1}{2} + \frac{{\left| {x - \frac{{a + b}}{2}} \right|}}{{b
- a}}} \right]\left[ {\frac{1}{2} + \frac{{\left| {y - \frac{{c +
d}}{2}} \right|}}{{d - c}}} \right]
 \cdot
\bigvee_c^d\bigvee_a^b\left( { f} \right)
\nonumber\\
&\le    \frac{1}{{4 }}  \cdot \bigvee_c^d\bigvee_a^b\left( {f}
\right).
\end{align*}
\end{remark}

\noindent $\bullet$ \textbf{Midpoint formula:} Now, if we denote
\begin{multline}
\mathcal{E}^n_M \left( {f;Q} \right) =\frac{f(a,c)+ f(a,d)
+f(b,c)+ f(b,d)}{4}
\\
+ \frac{1}{2^{n+2}}\sum\limits_{j = 1}^n \frac{1}{{j!}}\left(
{\begin{array}{*{20}c}
   n  \\
   j  \\
\end{array}} \right)
\left( {b - a } \right)^{n - j} \left( {d - c} \right)^{ j}\left\{
{ D^{n}f\left( {a,c } \right) + (-1)^j D^{n}f\left( {a,d }
\right)} \right.
\\
+ \left. { (-1)^j D^{n}f\left( {b,c } \right)+ D^{n}f\left( {b,d }
\right) } \right\}\label{eq3.19}
\end{multline}
and
\begin{align}
\label{eq3.20}\mathcal{F}^n_M \left( {f;Q} \right)
=\frac{1}{{\left( {b - a} \right)\left( {d - c} \right)}}\int_c^d
{\int_a^b {M_n \left( {t,s} \right)d_t d_s \left( {D^n f\left(
{t,s} \right)} \right)}}
\end{align}
where,
\begin{multline*}
M_n \left( {t,s} \right) = \frac{\left( {b - a} \right)\left( {d -
c} \right)}{{4n!}}
\\
\times\left\{
\begin{array}{l}
 \left( {\frac{a+b}{2} - t} \right)^n \left( {\frac{c+d}{2} - s} \right)^n ,\,\,\,\,\,\,\,\,\,\,\,\,\,\,\,\,\,\,\,\,\,\,\,\,\,\,\,\,\,\,\,\,a \le t \le \frac{a+b}{2},\,\,\,\,\,\,\,\,c \le s \le \frac{c+d}{2} \\
 \left( { - 1} \right)^{n+1} \left( {t - \frac{a+b}{2}} \right)^n \left( {\frac{c+d}{2} - s} \right)^n,\,\,\,\,\,\frac{a+b}{2} < t \le b,\,\,\,\,\,\,\,\,\,\,\,\,\,\,c \le s \le \frac{c+d}{2} \\
 \left( { - 1} \right)^{n+1} \left( {\frac{a+b}{2} - t} \right)^n \left( {s - \frac{c+d}{2}} \right)^n,\,\,\,\,\,\,\,\,\,\,a \le t \le \frac{a+b}{2},\,\,\frac{c+d}{2} < s \le d \\
 \left( {t - \frac{a+b}{2}} \right)^n\left( {s - \frac{c+d}{2}} \right)^n,\,\,\,\,\,\,\,\,\,\,\,\,\,\,\,\,\,\,\,\,\,\,\,\,\,\,\frac{a+b}{2} < t \le b,\,\,\,\,\,\,\,\,\frac{c+d}{2} < s \le d \\
 \end{array} \right.
\end{multline*}
then we can approximate the value of a function at its midpoint in
terms of values of the function and its partial derivatives taken
at the end points with  error $\mathcal{F}_M(f;Q)$. Namely, we
have the representation formula
\begin{align}
f\left( {\frac{{a + b}}{2},\frac{{c + d}}{2}} \right) =
\mathcal{E}^n_M \left( {f;Q} \right) + \mathcal{F}^n_M \left(
{f;Q} \right)\label{eq3.21}
\end{align}
The absolute value of the error can be bounded as follows:
\begin{corollary}
With the assumptions of Theorem {\ref{thm5.6.4}} for $f$, $Q$ and
$n$, we have the inequality
\begin{align}
\left| { \mathcal{F}^n_M \left( {f;Q} \right) } \right| \le
\frac{{\left( {b - a} \right)^n \left( {d - c} \right)^n }}{{2^{2n
+ 2} n!}} \cdot \bigvee_c^d\bigvee_a^b\left( {f} \right)
\end{align}
\end{corollary}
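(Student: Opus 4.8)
The plan is to read the midpoint error $\mathcal{F}^n_M(f;Q)$ as the remainder $\mathcal{B}_n(f;Q)$ of Theorem \ref{thm5.6.1} evaluated at the centre of the rectangle, that is at $x=\frac{a+b}{2}$ and $y=\frac{c+d}{2}$. The midpoint representation \eqref{eq3.21}, with $\mathcal{E}^n_M(f;Q)$ as in \eqref{eq3.19}, is precisely what \eqref{eq5.6.1} produces for these two values, so no new identity is needed and the whole task reduces to specializing the estimate that Theorem \ref{thm5.6.4} already provides for $\mathcal{B}_n$.

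First I would record the four linear factors at the centre, namely $b-x=x-a=\frac{b-a}{2}$ and $d-y=y-c=\frac{d-c}{2}$, and substitute them into the kernel $S_n(x,t;y,s)$ of Theorem \ref{thm5.6.1}. Pulling out the common factor $\frac{(b-a)(d-c)}{4}$ shows, branch by branch, that $\left|S_n\left(\frac{a+b}{2},t;\frac{c+d}{2},s\right)\right|$ coincides with $\left|M_n(t,s)\right|$ from \eqref{eq3.20}; the two middle branches differ only by a sign, which is harmless here because the bound in Theorem \ref{thm5.6.4} is obtained from Lemma \ref{lma3} and thus depends on the kernel only through its modulus. Consequently the estimate \eqref{eq5.6.17} applies to $\mathcal{F}^n_M(f;Q)$ after setting $x=\frac{a+b}{2}$, $y=\frac{c+d}{2}$.

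It then remains to evaluate the two maxima in the sharp (first) bound of \eqref{eq5.6.17} at the centre. At $y=\frac{c+d}{2}$ both competitors $(y-c)^n(d-y)$ and $(y-c)(d-y)^n$ equal $\left(\frac{d-c}{2}\right)^{n+1}$, and at $x=\frac{a+b}{2}$ both of $(x-a)^n(b-x)$ and $(x-a)(b-x)^n$ equal $\left(\frac{b-a}{2}\right)^{n+1}$. Feeding these into Theorem \ref{thm5.6.4} gives
\[
\left|\mathcal{F}^n_M(f;Q)\right|\le \frac{1}{n!\left(b-a\right)\left(d-c\right)}\cdot\left(\frac{d-c}{2}\right)^{n+1}\left(\frac{b-a}{2}\right)^{n+1}\cdot\bigvee_c^d\bigvee_a^b\left(D^n f\right),
\]
and cancelling one factor $(b-a)(d-c)$ while collecting the powers of $2$ yields the claimed constant $\frac{(b-a)^n(d-c)^n}{2^{2n+2}n!}$. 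I note in passing that at the centre the first and second bounds of \eqref{eq5.6.17} coincide, since the maxima there attain exactly the extremal values used to pass from the first bound to the second.

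The main---and essentially the only---obstacle is the branch-wise bookkeeping in the kernel comparison: one must check all four pieces of $S_n$ against $M_n$ and confirm that the only discrepancy is a sign confined to two branches, which the modulus removes. Once this is in place the result is a direct specialization of Theorem \ref{thm5.6.4}, requiring no further analytic input.
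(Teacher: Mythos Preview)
Your proposal is correct and matches the paper's intended argument: the corollary is stated without proof precisely because it is the specialization of Theorem~\ref{thm5.6.4} to $x=\frac{a+b}{2}$, $y=\frac{c+d}{2}$, which is exactly what you carry out. Your observation about the sign discrepancy in the middle branches of $M_n$ (the paper itself writes $(-1)^n$ in one place and $(-1)^{n+1}$ in another) and its irrelevance for a bound obtained via Lemma~\ref{lma3} is a useful clarification that the paper glosses over.
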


For another assumptions on $f$, the case when $D^n f$ are
absolutely continuous on $Q$ for all ($n\ge0$), we give the
following estimates for the remainder $\mathcal{B}_n\left( {f,Q}
\right)$:
\begin{theorem}
\label{thm5.6.11}Let $f:Q\to \mathbb{R}$ be a real valued function
which has continuous $(n-1)$ partial derivatives ($n\ge 1$). If
the $n$-th partial derivatives $D^{n}f$ ($n\ge1$) are absolutely
continuous on $Q$, then for any $(x,y) \in Q$ we have
\begin{align}
\label{eq3.28}\left| {\mathcal{B}_n\left( {f,Q} \right)} \right|
 &\le\left\{
\begin{array}{l}
 \frac{1}{{\left( {n} \right)!\left( {n + 1} \right)^2\left( {b -
a} \right)\left( {d - c} \right)}} \left[{\left( {b - x}
\right)\left( {x - a} \right)^{n + 1}+ \left( {x - a}
\right)\left( {b - x} \right)^{n + 1}}\right]
\\
  \times \left[{\left( {d - y} \right) \left( {y - c}
\right)^{n + 1} + \left( {y - c} \right) \left( {d - y} \right)^{n
+ 1} }\right]  \cdot \left\| {D^{n + 1} f} \right\|_{Q,\infty },
\,\,\,\,\,{\rm{if}}\,\,\,D^{n + 1} f \in L_\infty(Q) ;\\
  \\
 \frac{1}{{\left( {n} \right)!\left( {nq + 1} \right)^{1/q}\left(
{b - a} \right)\left( {d - c} \right)}} \left[{\left( {b - x}
\right)\left( {x - a} \right)^{n + 1/q} + \left( {x - a}
\right)\left( {b - x} \right)^{n + 1/q} }\right]
\\
 \times  \left[{\left( {d - y} \right) \left( {y - c}
\right)^{n + 1/q} + \left( {y - c} \right) \left( {d - y}
\right)^{n + 1/q}  }\right] \cdot \left\| {D^{n + 1} f}
\right\|_{Q,p }, \,\,\,\,{\rm{if}}\,\,\,D^{n
+ 1} f \in L_p(Q),\\
\,\,\,\,\,\,\,\,\,\,\,\,\,\,\,\,\,\,\,\,\,\,\,\,\,\,\,\,\,\,\,\,\,\,\,\,\,\,\,\,\,\,\,\,\,\,\,
\,\,\,\,\,\,\,\,\,\,\,\,\,\,\,\,\,\,\,\,\,\,\,\,\,\,\,\,\,\,\,\,\,\,\,\,\,\,\,\,\,\,\,\,\,\,\,\,\,\,\,\,\,\,\,\,\,\,\,\,\,\,\,\,\,\,\,\,\,\,\,\,\,\,\,\,\,\,\,\,\,\,\,\,\,\,\,\,\,\,\,\,\,\,\,\,\,\,\,\,\,\,\,\,
p > 1,\frac{1}{p} + \frac{1}{q} = 1;
 \end{array} \right.
\end{align}
\end{theorem}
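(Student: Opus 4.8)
The plan is to use the absolute continuity of $D^{n}f$ to replace the Riemann--Stieltjes double integral defining $\mathcal{B}_n(f,Q)$ by an ordinary double integral, and then to estimate that integral by Hölder's inequality. Concretely, when $D^{n}f$ is absolutely continuous on $Q$ the mixed Stieltjes differential collapses, $d_t d_s\!\left(D^{n}f(t,s)\right)=D^{n+1}f(t,s)\,dt\,ds$, so that from \eqref{eq5.6.16}
\[
\mathcal{B}_n(f,Q)=\frac{1}{(b-a)(d-c)}\int_c^d\int_a^b S_n(x,t;y,s)\,D^{n+1}f(t,s)\,dt\,ds .
\]
Everything then reduces to estimating this genuine integral of the explicit kernel $S_n$ against $D^{n+1}f$.

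The observation that organizes the whole computation is that $|S_n|$ \emph{separates} the variables: on each of the four subrectangles cut out by $t\lessgtr x$ and $s\lessgtr y$ the kernel is a product of a power of $|x-t|$ (times $b-x$ or $x-a$) and a power of $|y-s|$ (times $d-y$ or $y-c$), the signs $(-1)^n$ being irrelevant under the modulus. I would therefore split $\int_c^d\int_a^b$ into the four iterated integrals over $[a,x]\times[c,y]$, $[x,b]\times[c,y]$, $[a,x]\times[y,d]$, $[x,b]\times[y,d]$ and evaluate each factor through the elementary one-dimensional formulas $\int_a^x (x-t)^m\,dt=(x-a)^{m+1}/(m+1)$ and $\int_x^b (t-x)^m\,dt=(b-x)^{m+1}/(m+1)$. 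For the first branch ($D^{n+1}f\in L_\infty$) I would pull $\|D^{n+1}f\|_{Q,\infty}$ out of the integral, leaving $\frac{1}{n!(b-a)(d-c)}\|D^{n+1}f\|_{Q,\infty}\int_c^d\int_a^b |S_n|\,dt\,ds$; with $m=n$ the double integral factors into the product of the two one-variable integrals, each contributing a factor $(n+1)^{-1}$, and this assembles exactly into $\frac{1}{n!(n+1)^2}\big[(b-x)(x-a)^{n+1}+(x-a)(b-x)^{n+1}\big]\big[(d-y)(y-c)^{n+1}+(y-c)(d-y)^{n+1}\big]\|D^{n+1}f\|_{Q,\infty}$, which is the asserted $L_\infty$ estimate.

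For the second branch ($D^{n+1}f\in L_p$, $1/p+1/q=1$) I would apply Hölder's inequality on each of the four subrectangles, bounding each piece of $\big|\int\!\int S_n\,D^{n+1}f\big|$ by $\left(\int\!\int |S_n|^q\right)^{1/q}\|D^{n+1}f\|_{Q,p}$, enlarging every regional $L_p$-norm to the norm over all of $Q$. The power integrals now read $\int_a^x (x-t)^{nq}\,dt=(x-a)^{nq+1}/(nq+1)$, so each one-variable $L^q$-factor produces $(x-a)^{n+1/q}/(nq+1)^{1/q}$ and its analogues; collecting the four terms yields the product form $\big[(b-x)(x-a)^{n+1/q}+(x-a)(b-x)^{n+1/q}\big]\big[(d-y)(y-c)^{n+1/q}+(y-c)(d-y)^{n+1/q}\big]$, up to the powers of $(nq+1)$. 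Here I would finish with the elementary inequality $\left(u^q+v^q\right)^{1/q}\le u+v$, valid for $q\ge 1$, in order to present the two bracketed sums in the additive form displayed in \eqref{eq3.28}.

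The main obstacle is essentially the bookkeeping: keeping track of the four regions together with their signs, and matching the outer factors $(b-x),(x-a),(d-y),(y-c)$ to the correct power integrals, before recognizing that the resulting four-term sum factors as a product of two two-term sums. The one genuinely delicate point is the very first step, namely justifying within the $\mathcal{RS}$-framework of \cite{alomari} that for absolutely continuous $D^{n}f$ the Riemann--Stieltjes double differential collapses to $D^{n+1}f\,dt\,ds$, so that the passage to an ordinary integral and the subsequent Hölder estimate are legitimate.
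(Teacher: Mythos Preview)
Your approach is essentially the paper's own: convert the Riemann--Stieltjes double integral to an ordinary one via absolute continuity of $D^n f$, split over the four subrectangles determined by $(x,y)$, apply the triangle inequality and then H\"older (or the sup-norm bound) on each piece, and finally enlarge the regional $L_p$-norms to $\|D^{n+1}f\|_{Q,p}$ so that the four terms factor into the product of the two bracketed sums.

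One small remark: the closing inequality $(u^q+v^q)^{1/q}\le u+v$ is not needed. Once you apply H\"older on each subrectangle separately and replace every regional norm by $\|D^{n+1}f\|_{Q,p}$, the four scalar coefficients already factor \emph{algebraically} as
\[
\bigl[(b-x)(x-a)^{n+1/q}+(x-a)(b-x)^{n+1/q}\bigr]\bigl[(d-y)(y-c)^{n+1/q}+(y-c)(d-y)^{n+1/q}\bigr],
\]
so no further inequality is required to reach \eqref{eq3.28}. (Incidentally, the H\"older step on each piece produces a factor $(nq+1)^{-2/q}$ rather than $(nq+1)^{-1/q}$, so the bound you obtain is in fact slightly sharper than the one displayed in the statement; the stated inequality is of course still valid.)
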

\begin{proof}
Since $D^n f$ is absolutely continuous on $Q$ then for any $(x,y)
\in Q$ we have the representation
\begin{multline}
\label{eq5.6.29}f\left( {x,y} \right) = \frac{1}{(b - a)(d -
c)}\left[ {(b - x)(d - y)f(a,c) + (b - x)(y - c) f(a,d)} \right.
\\
\left. {+(x - a)(d - y) f(b,c) + (x - a)(y - c)f(b,d) } \right] +
\frac{(y-c)(d - y)}{(b - a)(d - c)}
\\
\times\sum\limits_{j = 1}^n \frac{1}{{j!}}\left(
{\begin{array}{*{20}c}
   n  \\
   j  \\
\end{array}} \right)
\left\{ {(b - x)\left( {x - a } \right)^{n - j} \left[ { \left( {y
- c } \right)^{j-1} D^{n}f\left( {a,c } \right)} \right.} \right.
+ \left. {(-1)^j\left( {d - y } \right)^{j-1} D^{n}f\left( {a,d }
\right)} \right]
\\
+ \left. {(x - a)\left( {b - x } \right)^{n - j} \left[ {
(-1)^j\left( {y - c } \right)^{j-1} D^{n}f\left( {b,c } \right)+
\left( {d - y } \right)^{j-1} D^{n}f\left( {b,d } \right)} \right]
} \right\}
\\
+ \frac{1}{{\left( {b - a} \right)\left( {d - c} \right)}}\int_c^d
{\int_a^b {S_n \left( {x,t;y,s} \right) D^{n+1}f\left( {t,s}
\right) dt}ds},
\end{multline}
where the integral is considered in the Lebesgue sense and the
kernel $S_n \left( {x,t;y,s} \right)$ is given in Theorem
\ref{thm5.6.1}. Utilizing the properties of the Stieltjes
integral, we have
\begin{align*}
\left| {\mathcal{B}_n\left( {f;Q} \right)} \right| &=
\frac{1}{{\left( {b - a} \right)\left( {d - c} \right)}}\left|
{\int_c^d {\int_a^b {S_n \left( {x,t;y,s} \right) D^{n+1}f\left(
{t,s} \right) dt}ds}} \right|
\\
&=\left| {\frac{1}{{n!\left( {b - a} \right)\left( {d - c}
\right)}}\left[ {\int_c^y {\int_a^x {\left( {x - t} \right)^n
\left( {b - x} \right)\left( {y - s} \right)^n \left( {d - y}
\right) D^{n+1} f\left( {t,s} \right)dt} ds} } \right.} \right.
\\
&\qquad+ \int_y^d {\int_a^x {\left( { - 1} \right)^{n + 1} \left(
{x - t} \right)^n \left( {b - x} \right)\left( {s - y} \right)^n
\left( {y - c} \right) D^{n+1} f\left( {t,s} \right)dt}ds}
\\
&\qquad+ \int_c^y {\int_x^b {\left( { - 1} \right)^{n + 1} \left(
{t - x} \right)^n \left( {x - a} \right)\left( {y - s} \right)^n
\left( {d - y} \right)D^{n+1} f\left( {t,s} \right)dt}ds }
\\
&\qquad\left. {\left. { + \int_y^d {\int_x^b {\left( {t - x}
\right)^n \left( {x - a} \right)\left( {y - s} \right)^n \left( {d
- y} \right)D^{n+1} f\left( {t,s} \right)dt} ds} } \right]}
\right|
\end{align*}
\begin{align*}
&\le \frac{1}{{n!\left( {b - a} \right)\left( {d - c}
\right)}}\left[ {\left| {\int_c^y {\int_a^x {\left( {x - t}
\right)^n \left( {b - x} \right)\left( {y - s} \right)^n \left( {d
- y} \right) D^{n+1} f\left( {t,s} \right)dt} ds}} \right|}
\right.
\\
&\qquad+ \left| {\int_y^d {\int_a^x {\left( { - 1} \right)^{n + 1}
\left( {x - t} \right)^n \left( {b - x} \right)\left( {s - y}
\right)^n \left( {y - c} \right) D^{n+1} f\left( {t,s}
\right)dt}ds} } \right|
\\
&\qquad+ \left| {\int_c^y {\int_x^b {\left( { - 1} \right)^{n + 1}
\left( {t - x} \right)^n \left( {x - a} \right)\left( {y - s}
\right)^n \left( {d - y} \right)D^{n+1} f\left( {t,s} \right)dt}ds
} } \right|
\\
&\qquad\left. { + \left| {\int_y^d {\int_x^b {\left( {t - x}
\right)^n \left( {x - a} \right)\left( {y - s} \right)^n \left( {d
- y} \right)D^{n+1} f\left( {t,s} \right)dt} ds} } \right|}
\right]
\\
&\le \frac{1}{{n!\left( {b - a} \right)\left( {d - c}
\right)}}\left[ {\left( {b - x} \right)\left( {d - y}
\right)\int_c^y {\int_a^x {\left( {x - t} \right)^n \left( {y - s}
\right)^n \left| {D^{n+1} f\left( {t,s} \right)} \right|dt} ds}}
\right.
\\
&\qquad+ \left( {y - c} \right)\left( {b - x} \right)\int_y^d
{\int_a^x { \left( {x - t} \right)^n \left( {s - y} \right)^n
\left| {D^{n+1} f\left( {t,s} \right)} \right|dt}ds}
\\
&\qquad+ \left( {d - y}\right) \left( {x - a} \right)\int_c^y
{\int_x^b {\left( {t - x} \right)^n \left( {y - s} \right)^n
\left| {D^{n+1} f\left( {t,s} \right) } \right|dt}ds }
\\
&\qquad\left. { + \left( {d - y} \right)\left( {x - a}
\right)\int_y^d {\int_x^b {\left( {t - x} \right)^n \left( {y - s}
\right)^n \left| {D^{n+1} f\left( {t,s} \right)} \right|dt} ds} }
\right].
\end{align*}
Utilizing properties of integration together with H\"{o}lder
integral inequality for the Lebesgue integral we have
\begin{multline}
\label{eq5.6.30}\int_c^y {\int_a^x {\left( {x - t} \right)^n
\left( {y - s} \right)^n \left| {D^{n+1} f\left( {t,s} \right)}
\right|dt} ds}
\\
= \left\{ \begin{array}{l}
 \frac{{\left( {x - a} \right)^{n + 1} \left( {y - c} \right)^{n + 1} }}{{\left( {n + 1} \right)^2 }} \cdot \left\| {D^{n + 1} f} \right\|_{\left[ {a,x} \right] \times \left[ {c,y} \right],\infty } ,\,\,\,\,\,\,\,\,\,\,\,\,\,\,D^{n + 1} f \in L_\infty  \left[ {a,x} \right] \times \left[ {c,y} \right]; \\
  \\
 \frac{{\left( {x - a} \right)^{n + 1/q} \left( {y - c} \right)^{n + 1/q} }}{{\left( {nq + 1} \right)^{1/q} }} \cdot \left\| {D^{n + 1} f} \right\|_{\left[ {a,x} \right] \times \left[ {c,y} \right],p} ,\,\,\,\,\,\,\,\,\,\,D^{n + 1} f \in L_p \left[ {a,x} \right] \times \left[ {c,y} \right],\\
 \,\,\,\,\,\,\,\,\,\,\,\,\,\,\,\,\,\,\,\,\,\,\,\,\,\,\,\,\,\,\,\,\,\,\,\,\,\,\,\,\,\,\,\,\,\,\,\,\,\,\,\,\,\,\,\,\,\,\,\,\,\,\,\,\,\,\,\,\,\,\,\,\,\,\,\,\,\,\,\,\,\,\,\,\,\,\,\,\,\,\,\,\,\,\,\,\,\,\,\,\,\,\,\,\,\,\,\,\,\,\,\,\,\,\,\,\,\,\,\,\,\,\,\,\,\,\,\,p > 1,\frac{1}{p} + \frac{1}{q} = 1;
 \end{array} \right.,
\end{multline}
\begin{multline}
\label{eq5.6.31}\int_y^d {\int_a^x { \left( {x - t} \right)^n
\left( {s - y} \right)^n \left| {D^{n+1} f\left( {t,s} \right)}
\right|dt}ds}
\\
= \left\{ \begin{array}{l}
 \frac{{\left( {x - a} \right)^{n + 1} \left( {d - y} \right)^{n + 1} }}{{\left( {n + 1} \right)^2 }} \cdot \left\| {D^{n + 1} f} \right\|_{\left[ {a,x} \right] \times \left[ {y,d} \right],\infty } ,\,\,\,\,\,\,\,\,\,\,\,\,\,\,D^{n + 1} f \in L_\infty  \left[ {a,x} \right] \times \left[ {y,d} \right]; \\
  \\
 \frac{{\left( {x - a} \right)^{n + 1/q} \left( {d - y} \right)^{n + 1/q} }}{{\left( {nq + 1} \right)^{1/q} }} \cdot \left\| {D^{n + 1} f} \right\|_{\left[ {a,x} \right] \times \left[ {y,d} \right],p} ,\,\,\,\,\,\,\,\,\,\,D^{n + 1} f \in L_p \left[ {a,x} \right] \times \left[ {y,d} \right],\\
 \,\,\,\,\,\,\,\,\,\,\,\,\,\,\,\,\,\,\,\,\,\,\,\,\,\,\,\,\,\,\,\,\,\,\,\,\,\,\,\,\,\,\,\,\,\,\,\,\,\,\,\,\,\,\,\,\,\,\,\,\,\,\,\,\,\,\,\,\,\,\,\,\,\,\,\,\,\,\,\,\,\,\,\,\,\,\,\,\,\,\,\,\,\,\,\,\,\,\,\,\,\,\,\,\,\,\,\,\,\,\,\,\,\,\,\,\,\,\,\,\,\,\,\,\,\,\,\,p > 1,\frac{1}{p} + \frac{1}{q} = 1;
  \end{array} \right.,
\end{multline}
\begin{multline}
\label{eq5.6.32}\int_c^y {\int_x^b {\left( {t - x} \right)^n
\left( {y - s} \right)^n \left| {D^{n+1} f\left( {t,s} \right) }
\right|dt}ds }
\\
= \left\{ \begin{array}{l}
 \frac{{\left( {b - x} \right)^{n + 1} \left( {y - c} \right)^{n + 1} }}{{\left( {n + 1} \right)^2 }} \cdot \left\| {D^{n + 1} f} \right\|_{\left[ {x,b} \right] \times \left[ {c,y} \right],\infty } ,\,\,\,\,\,\,\,\,\,\,\,\,\,\,D^{n + 1} f \in L_\infty  \left[ {x,b} \right] \times \left[ {c,y} \right]; \\
  \\
 \frac{{\left( {b - x} \right)^{n + 1/q} \left( {y - c} \right)^{n + 1/q} }}{{\left( {nq + 1} \right)^{1/q} }} \cdot \left\| {D^{n + 1} f} \right\|_{\left[ {x,b} \right] \times \left[ {c,y} \right],p} ,\,\,\,\,\,\,\,\,\,\,D^{n + 1} f \in L_p \left[ {x,b} \right] \times \left[ {c,y} \right],\\
 \,\,\,\,\,\,\,\,\,\,\,\,\,\,\,\,\,\,\,\,\,\,\,\,\,\,\,\,\,\,\,\,\,\,\,\,\,\,\,\,\,\,\,\,\,\,\,\,\,\,\,\,\,\,\,\,\,\,\,\,\,\,\,\,\,\,\,\,\,\,\,\,\,\,\,\,\,\,\,\,\,\,\,\,\,\,\,\,\,\,\,\,\,\,\,\,\,\,\,\,\,\,\,\,\,\,\,\,\,\,\,\,\,\,\,\,\,\,\,\,\,\,\,\,\,\,\,\,p > 1,\frac{1}{p} + \frac{1}{q} = 1;
  \end{array} \right.,
\end{multline}
and
\begin{multline}
\label{eq5.6.33}\int_y^d {\int_x^b {\left( {t - x} \right)^n
\left( {y - s} \right)^n \left| {D^{n+1} f\left( {t,s} \right)}
\right|dt} ds}
\\
= \left\{ \begin{array}{l}
 \frac{{\left( {b - x} \right)^{n + 1} \left( {d - y} \right)^{n + 1} }}{{\left( {n + 1} \right)^2 }} \cdot \left\| {D^{n + 1} f} \right\|_{\left[ {x,b} \right] \times \left[ {y,d} \right],\infty } ,\,\,\,\,\,\,\,\,\,\,\,\,\,\,D^{n + 1} f \in L_\infty  \left[ {x,b} \right] \times \left[ {y,d} \right]; \\
  \\
 \frac{{\left( {b - x} \right)^{n + 1/q} \left( {d - y} \right)^{n + 1/q} }}{{\left( {nq + 1} \right)^{1/q} }} \cdot \left\| {D^{n + 1} f} \right\|_{\left[ {x,b} \right] \times \left[ {y,d} \right],p} ,\,\,\,\,\,\,\,\,\,\,D^{n + 1} f \in L_p \left[ {x,b} \right] \times \left[ {y,d} \right],\\
 \,\,\,\,\,\,\,\,\,\,\,\,\,\,\,\,\,\,\,\,\,\,\,\,\,\,\,\,\,\,\,\,\,\,\,\,\,\,\,\,\,\,\,\,\,\,\,\,\,\,\,\,\,\,\,\,\,\,\,\,\,\,\,\,\,\,\,\,\,\,\,\,\,\,\,\,\,\,\,\,\,\,\,\,\,\,\,\,\,\,\,\,\,\,\,\,\,\,\,\,\,\,\,\,\,\,\,\,\,\,\,\,\,\,\,\,\,\,\,\,\,\,\,\,\,\,\,\,p > 1,\frac{1}{p} + \frac{1}{q} = 1;
  \end{array} \right.
\end{multline}
Adding (\ref{eq5.6.30})--(\ref{eq5.6.33}) we deduce that
\begin{multline*}
 \left| {\mathcal{B}_n\left( {f,Q} \right)} \right|
\\
  \le \frac{\left( {b - x} \right)\left( {d - y} \right)}{{n!\left(
{b - a} \right)\left( {d - c} \right)}} \times\left\{
\begin{array}{l}
 \frac{{\left( {x - a} \right)^{n + 1} \left( {y - c} \right)^{n + 1} }}{{\left( {n + 1} \right)^2 }} \cdot \left\| {D^{n + 1} f} \right\|_{\left[ {a,x} \right] \times \left[ {c,y} \right],\infty } ,\,\,D^{n + 1} f \in L_\infty  \left[ {a,x} \right] \times \left[ {c,y} \right] ; \\
  \\
 \frac{{\left( {x - a} \right)^{n + 1/q} \left( {y - c} \right)^{n + 1/q} }}{{\left( {nq + 1} \right)^{1/q} }} \cdot \left\| {D^{n + 1} f} \right\|_{\left[ {a,x} \right] \times \left[ {c,y} \right],p} ,\,\,D^{n + 1} f \in L_p  \left[ {a,x} \right] \times \left[ {c,y} \right],\\
 \,\,\,\,\,\,\,\,\,\,\,\,\,\,\,\,\,\,\,\,\,\,\,\,\,\,\,\,\,\,\,\,\,\,\,\,\,\,\,\,\,\,\,\,\,\,\,\,\,\,\,\,\,\,\,\,\,\,\,\,\,\,\,\,\,\,\,\,\,\,\,\,\,\,\,\,\,\,\,\,\,\,\,\,\,\,\,\,\,\,\,\,\,\,\,\,\,\,\,\,\,\,\,\,p > 1,\frac{1}{p} + \frac{1}{q} = 1;
  \end{array} \right.\nonumber\\
\\
 + \frac{\left( {b - x} \right)\left( {y - c}
\right)}{{n!\left( {b - a} \right)\left( {d - c} \right)}}
\times\left\{
\begin{array}{l}
 \frac{{\left( {x - a} \right)^{n + 1} \left( {d - y} \right)^{n + 1} }}{{\left( {n + 1} \right)^2 }} \cdot \left\| {D^{n + 1} f} \right\|_{\left[ {a,x} \right] \times \left[ {y,d} \right],\infty } ,\,\,D^{n + 1} f \in L_\infty  \left[ {a,x} \right] \times \left[ {y,d} \right] ; \\
  \\
 \frac{{\left( {x - a} \right)^{n + 1/q} \left( {d - y} \right)^{n + 1/q} }}{{\left( {nq + 1} \right)^{1/q} }} \cdot \left\| {D^{n + 1} f} \right\|_{\left[ {a,x} \right] \times \left[ {y,d} \right],p} ,\,\,D^{n + 1} f \in L_p  \left[ {a,x} \right] \times \left[ {y,d} \right],\\
 \,\,\,\,\,\,\,\,\,\,\,\,\,\,\,\,\,\,\,\,\,\,\,\,\,\,\,\,\,\,\,\,\,\,\,\,\,\,\,\,\,\,\,\,\,\,\,\,\,\,\,\,\,\,\,\,\,\,\,\,\,\,\,\,\,\,\,\,\,\,\,\,\,\,\,\,\,\,\,\,\,\,\,\,\,\,\,\,\,\,\,\,\,\,\,\,\,\,\,\,\,\,\,\,p > 1,\frac{1}{p} + \frac{1}{q} = 1;
 \end{array} \right.
\nonumber\\\\
 + \frac{\left( {x - a} \right)\left( {d - y}
\right)}{{n!\left( {b - a} \right)\left( {d - c} \right)}}
\times\left\{
\begin{array}{l}
 \frac{{\left( {b - x} \right)^{n + 1} \left( {y - c} \right)^{n + 1} }}{{\left( {n + 1} \right)^2 }} \cdot \left\| {D^{n + 1} f} \right\|_{\left[ {x,b} \right] \times \left[ {c,y} \right],\infty } ,\,\,D^{n + 1} f \in L_\infty  \left[ {x,b} \right] \times \left[ {c,y} \right] ; \\
  \\
 \frac{{\left( {b - x} \right)^{n + 1/q} \left( {y - c} \right)^{n + 1/q} }}{{\left( {nq + 1} \right)^{1/q} }} \cdot \left\| {D^{n + 1} f} \right\|_{\left[ {x,b} \right] \times \left[ {c,y} \right],p} ,\,\,D^{n + 1} f \in L_p  \left[ {x,b} \right] \times \left[ {c,y} \right],\\
 \,\,\,\,\,\,\,\,\,\,\,\,\,\,\,\,\,\,\,\,\,\,\,\,\,\,\,\,\,\,\,\,\,\,\,\,\,\,\,\,\,\,\,\,\,\,\,\,\,\,\,\,\,\,\,\,\,\,\,\,\,\,\,\,\,\,\,\,\,\,\,\,\,\,\,\,\,\,\,\,\,\,\,\,\,\,\,\,\,\,\,\,\,\,\,\,\,\,\,\,\,\,\,\,p > 1,\frac{1}{p} + \frac{1}{q} = 1;
 \end{array} \right.
\end{multline*}
\begin{align*}
&\qquad+ \frac{\left( {x - a} \right)\left( {y - c}
\right)}{{n!\left( {b - a} \right)\left( {d - c} \right)}}
\times\left\{
\begin{array}{l}
 \frac{{\left( {b - x} \right)^{n + 1} \left( {d - y} \right)^{n + 1} }}{{\left( {n + 1} \right)^2 }} \cdot \left\| {D^{n + 1} f} \right\|_{\left[ {x,b} \right] \times \left[ {y,d} \right],\infty } ,\,\,D^{n + 1} f \in L_\infty  \left[ {x,b} \right] \times \left[ {y,d} \right] ; \\
  \\
 \frac{{\left( {b - x} \right)^{n + 1/q} \left( {d - y} \right)^{n + 1/q} }}{{\left( {nq + 1} \right)^{1/q} }} \cdot \left\| {D^{n + 1} f} \right\|_{\left[ {x,b} \right] \times \left[ {y,d} \right],p} ,\,\,D^{n + 1} f \in L_p  \left[ {x,b} \right] \times \left[ {y,d} \right],\\
 \,\,\,\,\,\,\,\,\,\,\,\,\,\,\,\,\,\,\,\,\,\,\,\,\,\,\,\,\,\,\,\,\,\,\,\,\,\,\,\,\,\,\,\,\,\,\,\,\,\,\,\,\,\,\,\,\,\,\,\,\,\,\,\,\,\,\,\,\,\,\,\,\,\,\,\,\,\,\,\,\,\,\,\,\,\,\,\,\,\,\,\,\,\,\,\,\,\,\,\,\,\,\,\,p > 1,\frac{1}{p} + \frac{1}{q} = 1;
 \end{array} \right.
\\
&\le \left\{
\begin{array}{l}
 \frac{1}{{\left( {n} \right)!\left( {n + 1} \right)^2\left( {b -
a} \right)\left( {d - c} \right)}} \left[{\left( {b - x}
\right)\left( {x - a} \right)^{n + 1}+ \left( {x - a}
\right)\left( {b - x} \right)^{n + 1}}\right]
\\
  \times \left[{\left( {d - y} \right) \left( {y - c}
\right)^{n + 1} + \left( {y - c} \right) \left( {d - y} \right)^{n
+ 1} }\right]  \cdot \left\| {D^{n + 1} f} \right\|_{Q,\infty },\\
\,\,\,\,\,\,\,\,\,\,\,\,\,\,\,\,\,\,\,\,\,\,\,\,\,\, \,\,\,\,\,\,\,\,\,\,\,\,\,\,\,\,\,\,\,\,\,\,\,\,\,\,\,\,\,\,\,\,\,\,\,\,\,\,\,\,\,\,\,\,\,\,\,\,\,\,\,\,\,\,\,\,\,\,\,\,\,\,\,\,\,\,\,\,\,\,\,\,\,\,\,\,\,\,\,\,\,\,\,\,\,\,\,\,\,\,\,\,\,\,\,\,\,\,\,\,\,\,\,\,\,\,\,\,\,\,\,\,\,\,\,\,\,\,\,\,\,\,{\rm{if}}\,\,\,D^{n + 1} f \in L_\infty(Q_{x,c}^{b,y}) ; \\
  \\
 \frac{1}{{\left( {n} \right)!\left( {nq + 1} \right)^{1/q}\left(
{b - a} \right)\left( {d - c} \right)}} \left[{\left( {b - x}
\right)\left( {x - a} \right)^{n + 1/q} + \left( {x - a}
\right)\left( {b - x} \right)^{n + 1/q} }\right]
\\
 \times  \left[{\left( {d - y} \right) \left( {y - c}
\right)^{n + 1/q} + \left( {y - c} \right) \left( {d - y}
\right)^{n + 1/q}  }\right] \cdot \left\| {D^{n + 1} f}
\right\|_{Q,p },
\\
\,\,\,\,\,\,\,\,\,\,\,\,\,\,\,\,\,\,\,\,\,\,\,\,\,\,
\,\,\,\,\,\,\,\,\,\,\,\,\,\,\,\,\,\,\,\,\,\,\,\,\,\,\,\,\,\,\,\,\,\,\,\,\,\,\,\,\,\,\,\,\,\,\,\,\,\,\,\,\,\,\,\,\,\,\,\,\,\,\,\,\,\,\,\,\,\,\,\,\,\,\,\,\,\,\,\,\,\,\,\,\,\,\,\,\,\,\,\,\,\,\,\,\,\,\,\,\,\,\,\,\,\,\,\,\,\,\,\,\,\,\,\,\,\,\,\,\,\,\,{\rm{if}}\,\,\,D^{n
+ 1} f \in L_p(Q_{x,c}^{b,y}),\\
\,\,\,\,\,\,\,\,\,\,\,\,\,\,\,\,\,\,\,\,\,\,\,\,\,\,\,\,\,\,\,\,\,\,\,\,\,\,\,\,\,\,\,\,\,\,\,
\,\,\,\,\,\,\,\,\,\,\,\,\,\,\,\,\,\,\,\,\,\,\,\,\,\,\,\,\,\,\,\,\,\,\,\,\,\,\,\,\,\,\,\,\,\,\,\,\,\,\,\,\,\,\,\,\,\,\,\,\,\,\,\,\,\,\,\,\,\,\,\,\,\,\,\,\,\,\,\,\,\,\,\,\,\,\,\,\,\,\,\,\,\,\,\,\,\,\,\,\,\,\,\,
p > 1,\frac{1}{p} + \frac{1}{q} = 1;
 \end{array} \right.
\end{align*}
\end{proof}

Thus, the value of the function $f$ can be approximated    at its
midpoint in terms of values of $f$ and its partial derivatives
taken at the end points with  error $\mathcal{F}_M(f;Q)$, i.e.,
\begin{align*}
f\left( {\frac{{a + b}}{2},\frac{{c + d}}{2}} \right) =
\mathcal{E}_M \left( {f;Q} \right) + \mathcal{F}_M \left( {f;Q}
\right),
\end{align*}
such that the error $\mathcal{F}_M(f;Q)$ satisfies the following
bounds:
\begin{corollary}
With the assumptions of Theorem {\ref{thm5.6.11}} for $f$, $Q$ and
$n$, we have the inequality
\begin{align*}
\left| { \mathcal{F}^n_M \left( {f;Q} \right) } \right| \le
\frac{1}{{n! }} \times\left\{
\begin{array}{l}
 \frac{{\left( {b - a} \right)^{n + 1} \left( {d - c} \right)^{n + 1} }}{{2^{2n+2}\left( {n + 1} \right)^2 }} \cdot \left\| {D^{n + 1} f} \right\|_{Q,\infty } ,\,\,{\rm{if}}\,\,D^{n + 1} f \in L_\infty(Q) ; \\
  \\
 \frac{{\left( {b - a} \right)^{n + 1/q} \left( {d - c} \right)^{n + 1/q} }}{{2^{2n + 2/q}\left( {nq + 1} \right)^{1/q} }} \cdot \left\| {D^{n + 1} f} \right\|_{Q,p} ,\,\,{\rm{if}}\,\,D^{n + 1} f \in L_p(Q),\\
 \,\,\,\,\,\,\,\,\,\,\,\,\,\,\,\,\,\,\,\,\,\,\,\,\,\,\,\,\,\,\,\,\,\,\,\,\,\,\,\,\,\,\,\,\,\,\,\,\,\,\,\,\,\,\,\,\,\,\,\,\,\,\,\,\,\,\,\,\,\,\,\,\,\,\,\,\,\,\,\,\,\,\,\,\,\,\,\,\,\,\,\,\,\,p > 1,\frac{1}{p} + \frac{1}{q} = 1;
 \end{array} \right..
\end{align*}
In particular, for $n=0$, we have
\begin{align*}
\left| { \mathcal{F}^0_M \left( {f;Q} \right) } \right| \le
\left\{
\begin{array}{l}
 \frac{{\left( {b - a} \right) \left( {d - c} \right) }}{{4 }} \cdot \left\| {D f} \right\|_{Q,\infty } ,\,\,{\rm{if}}\,\,D f \in L_\infty(Q) ; \\
  \\
 \frac{{\left( {b - a} \right)^{1/q} \left( {d - c} \right)^{1/q} }}{{4^{1/q} }} \cdot \left\| {D  f} \right\|_{Q,p} ,\,\,{\rm{if}}\,\,D  f \in L_p(Q),\\
 \,\,\,\,\,\,\,\,\,\,\,\,\,\,\,\,\,\,\,\,\,\,\,\,\,\,\,\,\,\,\,\,\,\,\,\,\,\,\,\,\,\,\,\,\,\,\,\,\,\,\,\,\,\,\,\,\,\,\,\,\,\,\,\,\,\,\,\,\,p
> 1,\frac{1}{p} + \frac{1}{q} = 1;
 \end{array} \right..
\end{align*}
\end{corollary}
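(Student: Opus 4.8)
The plan is to recognize that the midpoint error $\mathcal{F}^n_M(f;Q)$ is nothing but the general remainder $\mathcal{B}_n(f,Q)$ specialized to the center of the rectangle. Comparing the kernel $M_n(t,s)$ appearing in \eqref{eq3.20} with the kernel $S_n(x,t;y,s)$ of Theorem \ref{thm5.6.1}, one sees that $|M_n(t,s)| = \bigl|S_n\!\left(\tfrac{a+b}{2},t;\tfrac{c+d}{2},s\right)\bigr|$; hence $\mathcal{F}^n_M(f;Q)$ coincides, up to the immaterial sign of the kernel, with $\mathcal{B}_n(f,Q)$ evaluated at $x=\tfrac{a+b}{2}$, $y=\tfrac{c+d}{2}$. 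Since the two bounds of Theorem \ref{thm5.6.11} depend only on $\|D^{n+1}f\|$ and on the moduli of the kernel pieces, the whole corollary follows by substituting these midpoint values into those bounds and simplifying.

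First I would record the midpoint values of the four basic lengths: at $x=\tfrac{a+b}{2}$ and $y=\tfrac{c+d}{2}$ we have $b-x = x-a = \tfrac{b-a}{2}$ and $d-y = y-c = \tfrac{d-c}{2}$. The key simplifying feature is that each of the two bracketed sums in Theorem \ref{thm5.6.11} consists of two terms that become equal at the midpoint, so each bracket collapses to twice a single product. For the $L_\infty$ branch I would compute $(b-x)(x-a)^{n+1} = (x-a)(b-x)^{n+1} = \left(\tfrac{b-a}{2}\right)^{n+2}$, so the first bracket equals $2\left(\tfrac{b-a}{2}\right)^{n+2} = \tfrac{(b-a)^{n+2}}{2^{n+1}}$, and likewise the second equals $\tfrac{(d-c)^{n+2}}{2^{n+1}}$. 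Dividing the product by $(b-a)(d-c)$ and keeping the prefactor $\tfrac{1}{n!(n+1)^2}$ yields exactly $\tfrac{(b-a)^{n+1}(d-c)^{n+1}}{2^{2n+2}(n+1)^2\,n!}\,\|D^{n+1}f\|_{Q,\infty}$, as stated.

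For the $L_p$ branch the same collapse gives each bracket equal to $\tfrac{(b-a)^{\,n+1+1/q}}{2^{\,n+1/q}}$ (respectively with $d-c$), and after dividing by $(b-a)(d-c)$ and carrying the factor $\tfrac{1}{n!(nq+1)^{1/q}}$ one arrives at the claimed $L_p$ bound with the power $2^{2n+2/q}$. The only place demanding care is precisely this exponent bookkeeping in the $L_p$ case: tracking the half-powers $n+1/q$ through the collapse of the brackets and confirming that the powers of $2$ combine to $2^{2n+2/q}$ rather than $2^{2n+2}$. This is purely algebraic and presents no genuine obstacle.

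Finally, the displayed $n=0$ specialization is obtained by setting $n=0$ in both branches, using $2^{2}=4$ in the $L_\infty$ case and $2^{2/q}=4^{1/q}$ together with $(0\cdot q+1)^{1/q}=1$ in the $L_p$ case; both reduce at once to the stated inequalities for $\mathcal{F}^0_M(f;Q)$.
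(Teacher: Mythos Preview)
Your proposal is correct and follows exactly the approach implicit in the paper: the corollary is stated without proof because it is obtained by substituting $x=\tfrac{a+b}{2}$, $y=\tfrac{c+d}{2}$ into the bounds of Theorem~\ref{thm5.6.11} and simplifying, which is precisely what you do. Your bookkeeping of the powers of $2$ in both the $L_\infty$ and $L_p$ branches is accurate, and your remark that only $|M_n|=|S_n|$ matters correctly disposes of the sign discrepancy between the two kernel definitions.
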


\begin{remark}
Different kinds of H\"{o}lder continuous functions of two variable
have been defined in literature, for example  a function $f: Q \to
\mathbb{R} $ is to be of $(\beta_1,\beta_2)$--H\"{o}lder type
mapping on the co-ordinate, if for all $\left( {t_1 ,s_1 }
\right), \left( {t_1 ,s_1 } \right) \in Q$, there exist $H_1,
H_2>0$ and $\beta_1, \beta_2>0$ such that
\begin{align*}
\left| {f\left( {t_1 ,s_1 } \right) - f\left( {t_2 ,s_2 } \right)}
\right| \le H_1 \left| {t_1  - t_2 } \right|^{\beta_1} + H_2
\left| {s_1 - s_2 } \right|^{\beta_2}.
\end{align*}
If $\beta_1 = \beta_2 = 1$, then $f$ is called $(L_1,
L_2)$--Lipschitz on the co-ordinate, i.e.,
\begin{align*}
\left| {f\left( {t_1 ,s_1 } \right) - f\left( {t_2 ,s_2 } \right)}
\right| \le L_1 \left| {t_1  - t_2 } \right| + L_2 \left| {s_1 -
s_2 } \right|.
\end{align*}
Another definition of H\"{o}lder continuous mapping can be stated
as:
\begin{align*}
\left| {f\left( {t_1 ,s_1 } \right) - f\left( {t_2 ,s_2 } \right)}
\right| \le M_{\beta _1 ,\beta _2 } \left( {t_1 ,t_2 ;s_1 ,s_2 }
\right).
\end{align*}
where, $ M_{\beta _1 ,\beta _2 } \left( {t_1 ,t_2 ;s_1 ,s_2 }
\right)$ is any real norm. In particular, if $\beta_1 = \beta_2 =
1$, the usual Euclidean norm is well known as:
\begin{align*}
M_{1,1} \left( {t_1 ,t_2 ;s_1 ,s_2 } \right): = \sqrt {\left( {t_1
- s_1 } \right)^2  + \left( {t_2  - s_2 } \right)^2 }.
\end{align*}
Under any type of H\"{o}lder continuity, one may has several
bounds for the error bounds $\mathcal{B}_n\left( {f,Q} \right)$
and $\mathcal{F}^n_M\left( {f,Q} \right)$ of the function $f$. We
leave this to the interested reader (see \cite{alomari}).
\end{remark}


\begin{thebibliography}{9}

\bibitem{alomari}
M. W. Alomari, Numerous approximations of Riemann-Stieltjes double
integrals, monograph 150 pages, 2009. Avaliable at
\url{https://www.scribd.com/document/324214887}

\bibitem{alomari1}
M. W. Alomari,  New Gr\"{u}ss type inequalities for double
integrals, \textit{Appl. Math. Comp.}, 228 (2014) 102--107.

\bibitem{Anastassiou}
G.A. Anastassiou, Univariate Ostrowski inequalities, {\em Monash.
Math.,\/}~{\bf 135} (3) (2002) 175--189. Revisited.


\bibitem{BarnettDragomir1}
N.S. Barnett and S.S. Dragomir, An Ostrowski type inequality for
double integrals and applications for cubature formulae, {\em
Soochow J. Math.\/}~{\bf 27} (2001), 1--10.


\bibitem{Clarkson}
J.A. Clarkson, On double Riemann--Stieltjes integrals, {\em Bull.
Amer. Math. Soc.\/}~{\bf 39} (1933), 929--936.

\bibitem{ClarksonAdams}
J.A. Clarkson and C. R. Adams, On definitions of bounded variation
for functions of two variables, {\em Bull. Amer. Math.
Soc.\/}~{\bf 35} (1933),  824--854.

\bibitem{Cerone}P. Cerone, S.S. Dragomir and C.E.M. Pearce, A generalised
trapezoid inequality for functions of bounded variation,
\textit{Turk. J. Math.}, 24 (2000), 147--163.


\bibitem{Dragomir1}
S.S. Dragomir, Sharp bounds for the deviation of a function from
the chord generated by its extremities and applications, {\em
RGMIA Res. Rep. Coll.,\/}~{\bf 10} (Suppl.) (2007) Preprint,
Article 17 [Online http://rgmia.vu.edu.au/v10(E).html].

\bibitem{Dragomir2}
S.S. Dragomir, Approximating real functions which possess $n$-th
derivatives of bounded variation and applications, {\em Computers
and Mathematics with Applications,\/}~{\bf 56} (2008), 2268--2278.



\bibitem{DragomirCeroneBarnettRoumeliotis}
S.S. Dragomir, P. Cerone, N.S. Barnett and J. Roumeliotis, An
inequality of the Ostrowski type for double integrals and
applications for cubature formulae, {\em Tamsui Oxf. J. Math.
Sci.\/}~{\bf 16} (2000), 1--16.


\bibitem{DragomirBarnettCerone}
S.S. Dragomir, N.S. Barnett and P. Cerone, An Ostrowski type
inequality for double integrals in terms of $L_p$-norms and
applications in numerical integration, {\em Rev. Anal. Numer.
Theor. Approx.\/}~{\bf 32} (2003), 161--169.

\bibitem{DragomirRassias}
S.S. Dragomir and T.M. Rassias, Ostrowski type inequalities and
applications in numerical integration, Boston: Kluwer Academic
(2002).

\bibitem{Frechet}
M. Fr\'{e}chet, Extension au cas des int\'{e}grals multiples d'une
d\'{e}finition de l'int\'{e}grale  due \`{a} Stieltjes, {\em
Nouvelles Annales de Math\'{e}matiques \/}~{\bf 10} (1910),
241--256.

\bibitem{Ghorpade}
S. Ghorpade and B. Limaye, A course in multivariable calculus and
analysis, Springer: New York (2009).

\bibitem{Hanna}
G. Hanna, Cubature rules from a generalized Taylor prespective,
PH.D. Thesis, Victoria University (2009).


\bibitem{PecaricVukelic}
J.E. Pe\v{c}ari\'{c} and A. Vukeli\'{c}, Montgomery's identities
for function of two variables, {\em J. Math. Anal. Appl.\/}~{\bf
332} (2007), 617--630.

\bibitem{Liu} Z. Liu,
A generalization of two sharp inequalities in two independent
variables, {\em Comp. Math. Appl.}~{\bf 59} (2010) 2809--2814.


\bibitem{Zhongxue} L\"{u} Zhongxue, On sharp inequalities of Simpson type and Ostrowski
type in two independent variables, {\em Comp. Math. Appl.}~{\bf
56} (2008) 2043--2047.


\end{thebibliography}
\end{document}